\documentclass[12pt,twoside]{article}
 
\usepackage[margin=1in]{geometry} 
\usepackage{amsmath,amsthm,amssymb}
\usepackage{amsfonts}
\usepackage{hyperref}
\usepackage{fancyhdr}
\usepackage{graphicx}
\usepackage{tikz}
\usepackage{float}
\usepackage{indentfirst}
\usepackage{mathtools}

\usetikzlibrary{decorations.pathreplacing}
\newcommand{\R}{\mathbb{R}}
\newcommand{\MF}{\mathcal{M}_\mathcal{F}}
\newcommand{\LF}{\mathcal{L}_\mathcal{F}}
\newenvironment{theorem}[2][Theorem]{\begin{trivlist}
\item[\hskip \labelsep {\bfseries #1}\hskip \labelsep {\bfseries #2.}]}{\end{trivlist}}
\newenvironment{lemma}[2][Lemma]{\begin{trivlist}
\item[\hskip \labelsep {\bfseries #1}\hskip \labelsep {\bfseries #2.}]}{\end{trivlist}}

\newenvironment{corollary}[2][Corollary]{\begin{trivlist}
\item[\hskip \labelsep {\bfseries #1}\hskip \labelsep {\bfseries #2.}]}{\end{trivlist}}
\newenvironment{definition}[2][Definition]{\begin{trivlist}
\item[\hskip \labelsep {\bfseries #1}\hskip \labelsep {\bfseries #2.}]}{\end{trivlist}}

\pagestyle{fancy}
\fancyhf{}
\fancyhead[CO]{\textsc{Owen Drummond} \hfill \thepage} 
\fancyhead[CE]{\textsc{Simon-Łojasiewicz Inequalities on Vector Bundles}   \hfill \thepage} 
\newtheorem{op}{Open Problem}[section]
\bibliographystyle{plain}

\begin{document}
 
\newcommand{\keywords}[1]{%
  \begin{flushleft}
    \textbf{Keywords:} \small #1
  \end{flushleft}
}
\title{Geometric and Analytic Aspects of Simon-Łojasiewicz Inequalities on Vector Bundles} 
\author{
  Owen Drummond \\
  Department of Mathematics, Rutgers University \\
  \texttt{owen.drummond@rutgers.edu} \\
  \\
  \small{Advised by: Natasa Sesum} \\
}
\maketitle
\begin{abstract}
    In real analysis, the Łojasiewicz inequalities, revitalized by Leon Simon in his pioneering work on singularities of energy minimizing maps, have proven to be monumental in differential geometry, geometric measure theory, and variational problems. These inequalities provide specific growth and stability conditions for prescribed real-analytic functions, and have found applications to gradient flows, gradient systems, and as explicated in this paper, vector bundles over compact Riemannian manifolds. In this work, we outline the theory of functionals and variational problems over vector bundles, explore applications to arbitrary real-analytic functionals, and describe the energy functional on $S^{n-1}$ as a functional over a vector bundle. 
    \keywords{Simon-Łojasiewicz inequalities, Energy Functional, Differential Geometry, Geometric Analysis, Calculus of Variations}
\end{abstract}
\tableofcontents
\section{Introduction}
The study of Simon-Łojasiewicz inequalities within the context of functionals defined on vector bundles over compact Riemannian manifolds represents a significant extension of classical analytical techniques, specifically within the calculus of variations and partial differential equations. These inequalities, originally developed by Stanisław Łojasiewicz and later extended by Leon Simon, have found wide applications in the analysis of gradient flows, convergence properties of solutions to elliptic and parabolic equations, and the stability of critical points. A long standing yet recondite problem in the geometric flows and geometric analysis pertains to the uniqueness of tangent maps, and these inequalities have found utility in addressing this question by demonstrating that all sufficiently small perturbations converge back to the same critical point, a condition sufficient for uniqueness.  

With his ingenuity, Simon discovered this application of Łojasiewicz inequalities to energy minimizing maps. We recall the following definition of the Dirichlet Energy functional. Let $u \in W^{1,2}(\Omega; N)$ and let $B_\rho(y)$ be a ball of radius $\rho$ centered at $y \in \Omega$, with $\Bar{B}_\rho(y) \subset \Omega$. The \textit{energy} of $u$ in $B_\rho(y)$ is given by
    \[
    \mathcal{E}_{B_\rho (y)}(u)=\int_{B_\rho (y)} |Du|^2
    \]
    where $Du$ is the $n \times p$ matrix with entries $D_i u^j$, and $|Du|^2 \sum_{i=1}^n \sum_{j=1}^p (D_i u^j)^2$. 
Furthermore, let  $u \in W^{1,2}(B_{\rho}(y); N)$ be a map. If for each ball $B_\rho (y) \subset \Omega$
\[
\mathcal{E}_{B_\rho (y)}(u) \leq \mathcal{E}_{B_\rho (y)}(w) 
\]
for every $w \in W^{1,2}(B_\rho (y); N)$ with $w=u$ in a neighborhood of $\partial B_{\rho}(y)$, the $u$ is called an \textbf{energy minimizing map onto} $\mathbf{N}$.
In this paper, we will delve into the specific application of these inequalities for both arbitrary real-analytic functional, as well as the energy function on $S^{n-1}$ in particular. By considering vector bundles over compact Riemannian manifolds, we introduce a framework by which these inequalities can be utilized in this critcal context for energy minimizers. Through this exploration, the paper aims to illuminate the deep interconnections between differential geometry, the calculus of variations, and geometric analysis, highlighting the versatility and depth of the Simon-Łojasiewicz inequalities in a broader mathematical landscape. We will primarily be referencing \textit{Theorems on Regularity and Singularity of Energy Minimizing Maps}\cite{simon1996theorems} by Leon Simon.
\section{Analytic Preliminaries}
Here, we will assume the reader is familiar with the basic theory of energy minimizing maps, tangent maps, and the singular set. A further exposition on these topics can be found in my paper \textit{Geometric Analysis of Energy Minimizing Maps: Tangent Maps and Singularities}\cite{drummond2024geo}. While not necessary, it would also be useful if the reader has had an introduction to PDEs and the Calculus of Variations. 
\subsection{Variational Equations for Energy Minimizers}
First, we describe the framework that produces a certain class of variational equations of energy minimizers. Let $\Omega$ be an open subset of $\mathbb{R}^n$, $n \geq 2$, and define $N$ to be a smooth, compact Riemannian Manifold of dimension $m \geq 2$ which is isometrically embedded in some Euclidean space $\mathbb{R}^p$. Assume $u \in W^{1,2}(\Omega; N)$ is energy minimizing. 
Let $\Bar{B}_\rho (y) \subset \Omega$, and for some $\delta > 0$, we have the 1-parameter family of maps
\[
\{u_s\}_{s \in (-\delta,\delta)}: B_\rho(y) \rightarrow N
\]
such that $u_0=u$. Moreover, $Du_s \in L^2(\Omega)$, and $u_s \equiv u$ in a neighborhood of $\partial B_\rho(y)$ for each $s \in (-\delta,\delta)$. Be definition, we know that $\mathcal{E}_{B_\rho(y)}(u_s)$ takes its minimum at $s=0$, and hence
\[
\frac{d \mathcal{E}_{B_\rho(y)}(u_s)}{ds}|_{s=0}=0
\]
Of course, this presupposes that the derivative is well defined at $s=0$, and in fact, this derivative is called the \textit{first variation} of $\mathcal{E}_{B_\rho(y)}$ relative to the family $\{u_s\}_{s \in (-\delta,\delta)}$. 

Before stating the first class of variations, we must first define an intuitively simply function which is unfortunately fraught with much technical baggage that will not be covered here, but for a further exposition, I recommend Section 2.12.3 in Simon's book (p.42-46)\cite{simon1996theorems}.
\begin{definition}{2.1 (Nearest Point Projection)}
Given a compact $C^\infty$ manifold $N$, and a tubular neighborhood $U=\{x \in \mathbb{R}^n: \text{dist}(x,N) < \delta \}$ of $N$, the \textbf{nearest point projection} is a map $\Pi \in C^{\infty}(\{x: \text{dist}(x,N) < \delta \}: \mathbb{R}^p)$ that takes a point $x \in U$ to the nearest point of $N$.
\end{definition}
\begin{figure}[H] 
    \centering
    \includegraphics[width=0.33\linewidth]{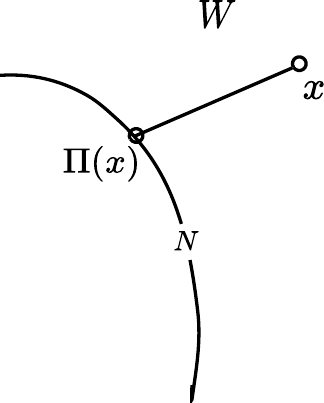}
    \caption{\textit{Nearest Point Projection}}
    \label{fig:my_label}
\end{figure}
Given this, we define the \textit{first class of variations} of energy minimizers as 
\[
u_s=\Pi \circ (u+s \xi)
\]
where $\Pi$ is the nearest point projection onto $N$, and $\xi=(\xi^1, \xi^2, ..., \xi^p)$ with each $\xi^j \in C_c^\infty (B_\rho (y))$. We have that the projection $\Pi$ onto $N$ is well defined in some open subset $W \supset N$, and so $u_s$ is defined as an admissible variation for $|s|$ sufficiently small. Here, we apply $D_i$ to the Taylor polynomial expansion of $\Pi$, we see that
\[
D_i u_s=D_i u +s(d \Pi_u(D_i \xi) + \text{Hess} \Pi_u(\xi, D_i u))+E
\]
where $\lvert E \rvert_{L^1(B_\rho(y))} \leq Cs^2$ for $|s|$ small. Now, by plugging this expansion in for the energy $\mathcal{E}_{B_\rho(y)} (u_s)$, and by leveraging information about the Hessian given of $\Pi$, we have the identity
\[
\int_{\Omega} \sum_{i=1}^n (D_i u \cdot D_i \xi - \xi \cdot A_u (D_i u, D_i u))=0
\]
where $A_u$ is the second fundamental form of $N$ at $u(x)$. Note that since $\xi$ is smooth and compactly supported, the if $u$ is $C^2$ then we can integrate by parts to obtain the equation
\[
\Delta u + \sum_{i=1}^n A_u (D_i u, D_i u) =0
\]
where $\Delta u = (\Delta u^1, ..., \Delta u^p)$. This is the main identity for $u \in C^2$ that we will utilize later.
\subsection{Łojasiewicz Inequalities}
Here, we will present the two inequalities attributed to Polish mathematician Stanisław Łojasiewicz\cite{lojasiewicz1965}.
\begin{theorem}{2.2.1 (Łojasciewz 1965)}
    Let $\Omega$ be an open subset of $\mathbb{R}^n$, and $f:\Omega \rightarrow \mathbb{R}$ a real-analytic function. Denote $Z$ as the set of all zeros of $f$, and for every compact subset $K$ of $\Omega$, $\exists \alpha, C > 0$ such that
    \[
    |\text{dist}(x, Z)|^{\alpha} \leq C|f(x)| \hspace{0.5 cm} \forall x \in K
    \]
\end{theorem}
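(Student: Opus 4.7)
The plan is to reduce the global claim on $K$ to a local one near each $x_0 \in K$, then establish the local inequality by analytic-arc methods. By compactness of $K$, if for every $x_0 \in K$ one can exhibit an open neighborhood $U_{x_0}$ and constants $\alpha_{x_0}, C_{x_0} > 0$ with $\text{dist}(x, Z)^{\alpha_{x_0}} \leq C_{x_0}|f(x)|$ on $U_{x_0} \cap K$, then a finite subcover together with the choice $\alpha = \max_{x_0} \alpha_{x_0}$ (possibly after shrinking $U_{x_0}$ so that $\text{dist}(x, Z) \leq 1$, which makes the inequality monotone in $\alpha$) yields the theorem. For $x_0 \notin Z$ the inequality is immediate from continuity of $|f|$, so the entire content lies at $x_0 \in Z$.

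For $x_0 \in Z$ I would argue by contradiction using the curve selection lemma for semi-analytic sets. If the desired inequality failed in every neighborhood for every choice of exponent, then for each $\alpha > 0$ the semi-analytic set
\[
A_\alpha = \{ x \in U_{x_0} : |f(x)| < \text{dist}(x, Z)^\alpha \}
\]
would accumulate at $x_0$. Curve selection furnishes a real-analytic arc $\gamma:[0,\epsilon) \to \overline{U_{x_0}}$ with $\gamma(0) = x_0$ and $\gamma((0,\epsilon)) \subset A_\alpha$. Since $\gamma$ does not lie in $Z$, the composition $f \circ \gamma$ is a nonzero real-analytic function with expansion $f(\gamma(t)) = c\, t^p + O(t^{p+1})$ for some integer $p \geq 1$ and $c \neq 0$. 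The subanalytic map $t \mapsto \text{dist}(\gamma(t), Z)$ admits a Puiseux-type lower bound $\text{dist}(\gamma(t), Z) \geq c'\, t^q$ for some rational $q > 0$, and choosing $\alpha$ initially larger than $p/q$ contradicts the defining inequality of $A_\alpha$ along $\gamma$. A compactness argument in the exponent then produces a uniform $\alpha$ working for all $x_0 \in Z \cap K$.

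A constructive alternative proceeds by Weierstrass preparation and induction on $n$. After a linear change of coordinates, $f(x) = u(x) P(x', x_n)$ locally at $x_0$, where $u$ is analytic and nonvanishing and $P$ is a distinguished polynomial in $x_n$ of degree $d$ with analytic coefficients in $x' = (x_1,\dots,x_{n-1})$. Where the discriminant of $P$ is nonzero the real roots $\phi_j(x')$ are analytic, and $|P|$ is comparable to $\prod_j |x_n - \phi_j(x')|$, which controls $\text{dist}(x, Z)^d$ directly; on the discriminant locus the inductive hypothesis in dimension $n-1$ applies. The main obstacle in either approach is the fact that $\text{dist}(\cdot, Z)$ is only subanalytic rather than analytic, so one must verify carefully that $A_\alpha$ is genuinely semi-analytic and that the asymptotic expansion of $\text{dist}(\gamma(t), Z)$ behaves as claimed. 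This is the technically delicate step where the proof genuinely invokes the deeper structure theory of semi-analytic sets developed by Łojasiewicz, and is what prevents the argument from being a routine power-series computation.
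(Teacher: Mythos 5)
The paper does not actually prove Theorem 2.2.1; it is quoted as a classical result with a citation to \L{}ojasiewicz's 1965 IHES preprint, so there is no in-paper argument to compare against. Judged on its own, your sketch correctly identifies the two standard proof architectures (curve selection over semi/subanalytic sets, and Weierstrass preparation with induction on dimension), and the localization-plus-finite-subcover reduction with $\alpha=\max\alpha_{x_0}$ after normalizing $\mathrm{dist}(x,Z)\le 1$ is fine. But the core of the curve-selection argument as written does not close. First, a directional error: along the arc $\gamma$ you invoke a \emph{lower} bound $\mathrm{dist}(\gamma(t),Z)\ge c't^{q}$, but a lower bound on the right-hand side of the strict inequality $|f|<\mathrm{dist}(\cdot,Z)^{\alpha}$ defining $A_\alpha$ cannot produce a contradiction. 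What you need is the \emph{upper} bound $\mathrm{dist}(\gamma(t),Z)\le |\gamma(t)-\gamma(0)|\le Lt$ (trivial since $\gamma(0)=x_0\in Z$), which combined with $|f(\gamma(t))|\ge \tfrac{|c|}{2}t^{p}$ forces $\alpha\le p$.

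Second, and more seriously, that corrected argument only shows: for each $\alpha$ such that $A_\alpha$ accumulates at $x_0$, there exists an arc along which $f$ vanishes to order $p\ge\alpha$. To conclude you must know that the orders of vanishing of $f$ along analytic arcs through $x_0$ not contained in $Z$ are uniformly bounded --- but that finiteness is essentially the \L{}ojasiewicz exponent itself, so the step you label ``a compactness argument in the exponent'' is circular as stated. This is exactly where the real proofs invoke heavy machinery: regular separation of semi-analytic sets and stratification, resolution of singularities, or the Bierstone--Milman device of applying curve selection to an auxiliary set in one higher dimension to extract a uniform Puiseux exponent. Your Weierstrass-preparation alternative has an analogous soft spot: $|P(x',x_n)|\asymp\prod_j|x_n-\phi_j(x')|$ ranges over \emph{all} (possibly complex) roots, and proximity of $x_n$ to a complex root does not control $\mathrm{dist}(x,Z)$ for the \emph{real} zero set $Z$, so the claimed bound $\mathrm{dist}(x,Z)^{d}\lesssim|P|$ needs the real--complex comparison that occupies much of \L{}ojasiewicz's original induction. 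The skeleton is right; the uniformity step is the theorem, and it is missing.
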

This inequality states that the (smallest) distance between a point in a compact subset $K \subset \Omega$ never exceeds the magnitude of $f$ itself, for appropriate constants $\alpha, C$.
\begin{theorem}{2.2.2 (Łojasciewz)}
Let $\Omega$ and $f$ be as above. For every critical point $x \in \Omega$ of $f$, $\exists$ a neighborhood $V$ of $x$, and an exponent $\theta \in [1/2,1]$ and a constant $C>0$ such that
\[
|f(x)-f(y)|^\theta \leq C|\nabla f(y)| \hspace{0.5 cm} \forall y \in V
\]
Thus, we can always provide upper bound, given by the magnitude of the gradient of $f$ for the distance between any two points in a suitable neighborhood of $x$, given appropriate constants $\theta$ and $C$.
\end{theorem}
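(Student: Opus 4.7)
The plan is to argue by contradiction via the Curve Selection Lemma from real-analytic geometry, combined with a term-by-term Puiseux analysis along a suitable analytic arc. By subtracting $f(x)$ from $f$ (which does not affect $\nabla f$), I may assume $f(x) = 0$, so that the desired inequality reads $|f(y)|^\theta \le C|\nabla f(y)|$ for all $y$ in some neighborhood $V$ of the critical point $x$. Suppose this fails for every $\theta \in [1/2,1]$, every $C > 0$, and every such $V$. Then for suitable $\theta_0 \in [1/2,1]$ and positive integer $n_0$, the semianalytic set
\[
S = \{\,y \in \Omega : f(y) \neq 0,\ |\nabla f(y)|^{2} < n_0^{-1}|f(y)|^{2\theta_0}\,\}
\]
has $x \in \overline{S}\setminus S$.

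Next I would invoke the Curve Selection Lemma: since $S$ is semianalytic with $x$ on its boundary, there exists a real-analytic arc $\gamma:[0,\varepsilon) \to \Omega$ with $\gamma(0) = x$ and $\gamma(t) \in S$ for every $t \in (0,\varepsilon)$. Along $\gamma$, the functions $f \circ \gamma$, each component of $(\nabla f)\circ \gamma$, and $\gamma'$ are real-analytic in $t$, so each admits a leading-order asymptotic of the form $c\,t^{p}(1+o(1))$ for some rational $p > 0$ and nonzero $c$.

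The contradiction comes from the chain-rule identity
\[
\frac{d}{dt} f(\gamma(t)) = \nabla f(\gamma(t)) \cdot \gamma'(t),
\]
which, together with the Cauchy--Schwarz inequality, forces the leading-order exponent of $|\nabla f(\gamma(t))|$ to be at most one less than that of $f(\gamma(t))$, after accounting for the order of $\gamma'$. Comparing this with the Puiseux asymptotic of $|f(\gamma(t))|^{\theta_0}$ shows that for $\theta_0$ sufficiently close to $1$ one necessarily has $|f(\gamma(t))|^{\theta_0} \le C\,|\nabla f(\gamma(t))|$ along $\gamma$, directly contradicting the defining inequality of $S$.

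The principal obstacle is justifying the Curve Selection Lemma itself, a substantial result in real-analytic geometry underpinned by the theory of semianalytic stratifications or, equivalently, by Hironaka's resolution of singularities. Once this tool is accepted, the remaining work reduces to disciplined bookkeeping of Puiseux exponents. The sharpness of the lower bound $\theta \ge 1/2$ is already visible in the model case $f(y) = |y - x|^2$: here $|f(y)|^{1/2} = |y - x|$ and $|\nabla f(y)| = 2|y - x|$, so $\theta = 1/2$ is attained and cannot be improved.
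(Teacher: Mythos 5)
The paper states Theorem 2.2.2 as a classical result with a citation to Łojasiewicz (1965) and gives no proof, so there is nothing in-paper to compare against; your proposal must stand on its own. It does not, because of a circularity in the quantifiers that the final step cannot survive. Your set $S$ is built from a \emph{fixed} exponent $\theta_0$ (and note that $S$ is only semianalytic if $2\theta_0$ is rational), and the Curve Selection Lemma then produces an arc $\gamma$ depending on that choice. Along $\gamma$ you get $f(\gamma(t)) = a\,t^{p}(1+o(1))$ for some integer $p \ge 1$, and the chain rule plus boundedness of $\gamma'$ gives $|\nabla f(\gamma(t))| \ge c\,t^{p-1}$; the defining inequality of $S$ forces $|\nabla f(\gamma(t))| \lesssim t^{p\theta_0}$, so you reach a contradiction only when $\theta_0 > 1 - 1/p$. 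Since $p$ depends on the arc and the arc depends on $\theta_0$, you cannot ``take $\theta_0$ sufficiently close to $1$'' afterwards, and since $p$ can be arbitrarily large as the arc varies, no single $\theta_0 < 1$ is certified. What the argument actually proves is only the endpoint case $\theta = 1$ (with $\theta_0 = 1$ the comparison $t^{p-1} \lesssim t^{p}$ fails as $t \to 0$). That endpoint happens to satisfy the statement as literally printed, because the paper writes $\theta \in [1/2,1]$ rather than the standard $[1/2,1)$, but it misses the entire content of the result as it is used in Section 5, where the exponent $1-\alpha/2$ must be strictly less than $1$.

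To close the gap you need a mechanism that yields one exponent valid uniformly on a neighborhood rather than arc by arc. The classical route combines Theorem 2.2.1 with the Bochnak--Łojasiewicz inequality $|f(y)-f(x)| \le C\,\mathrm{dist}(y,Z)\,|\nabla f(y)|$, where $Z$ is the zero set of $f - f(x)$ near $x$; that inequality \emph{is} provable by a single application of curve selection of exactly the kind you sketch, because it involves no free exponent. Substituting $\mathrm{dist}(y,Z) \le (C|f(y)-f(x)|)^{1/\alpha}$ from Theorem 2.2.1 then gives $|f(y)-f(x)|^{1-1/\alpha} \le C|\nabla f(y)|$ with a uniform exponent $1-1/\alpha < 1$ (one must also check that $\alpha \geq 2$ at a critical point, and that $f$ is constant on the relevant component of the critical set, the latter again by curve selection). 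Your closing observation that $f(y) = |y-x|^2$ attains $\theta = 1/2$ is correct and explains why the range starts at $1/2$.
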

It is worth mentioning that these inequalities have profound implications in the theory of ODEs and PDEs. As seen in a paper of Ralph Chill, one application is the \textit{Simon-Łojasciewz Gradient Inequality}\cite{chill2003lojasiewicz}, which is concerned with energy functionals defined on some Hilbert Space $V \hookrightarrow L^2(\Omega)$. Chill proved several instances where these inequalities still held regardless of analyticity assumptions. Further applications include minimal networks, geometric flows, and gradient systems.

\section{Calculus of Variations on Vector Bundles}

\subsection{Notation}
The energy functional provides motivation for explicating a general theory of (smooth) functionals defined on smooth sections of a vector bundle over a compact Riemannian manifold. For example, if we wish to study the structure of tangent maps and $\text{sing}u$ for some energy minimizing map $u \in W^{1,2}(S^{n-1};N)$, then this general framework provides a way to describe the properties of $\mathcal{E}_{S^{n-1}}(u)$ by examining its behavior over vector bundles of $S^{n-1}$. The following is the general framework:
Let $n<p_1$, $q \leq p_2$ be positive integers, and let $\Sigma$ be an n-dimensional compact $C^1$ Riemannian manifold isometrically embedded in $\mathbb{R}^{p_1}$. Define the vector bundle $\mathbf{V}=\cup_{\omega \in \Sigma} V_\omega$, where each $V_\omega$ is some q-dimensional subspace of $\mathbb{R}^{p_2}$. Here, $V_\omega$ varies smoothly with respect to $\omega$, that is the matrix of the orthogonal projection $P_\omega$ of $\mathbb{R}^{p_2}$ onto $V_\omega$ is a $C^\infty$ function of $\omega$.
\begin{figure}[H] 
    \centering
    \includegraphics[width=0.75\linewidth]{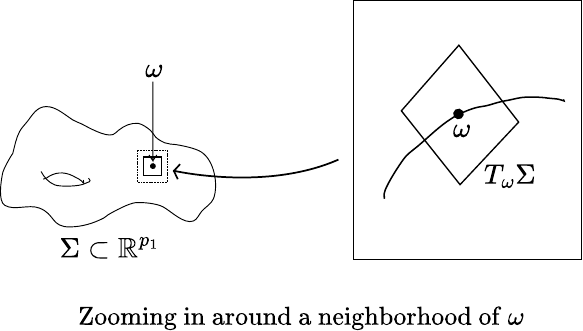}
    \caption{The tangent spaces $T_\omega \Sigma$}
    \label{fig:my_label}
\end{figure}
\begin{definition}{3.1.1 (Sections)}
For $k=0,1,...,C^k(\mathbf{V})$, we define the set of $\mathbf{C^k}$ \textbf{sections} of $\mathbf{V}$ to be maps $u \in C^k(\Sigma; \mathbb{R}^{p_2})$ with $u(\omega) \in V_\omega$ for each $\omega \in \Sigma$. Moreover, for any $\alpha \in (0,1]$, $C^{k,\alpha}(\mathbf{V})$ denotes the $\mathbf{C^{k,\alpha}}$ \textbf{sections of} $\mathbf{V}$, that is, functions that are Hölder continuous with exponent $\alpha$. 
\end{definition}
Further, $L^2(\mathbf{V})$ denotes the subspace of $L^2(\Sigma; \mathbb{R}^{p_2})$ such that $u(\omega) \in V_\omega$ for a.e. $\omega \in \Sigma$. Lastly, we define the $L^2$ inner product as
\[
\langle u,v \rangle_{L^2(\mathbf{V})} = \int_\Sigma u(\omega) \cdot v(\omega) d\omega
\]
\textbf{Note}: $d\omega$ is the Riemannian volume form associated with the metric on $\Sigma$.
\subsection{Gradient Operators}
Here, we would like to make sense of the gradient operator over vector bundles (and manifolds) in relation to the classical Euclidean directional derivative. First, let $\tau_1,...,\tau_n$ be an orthonormal basis for $T_\omega \Sigma$ of locally defined smoothly varying $\mathbb{R}^{p_1}$-valued functions $\Sigma$. We denote $\nabla_{\tau_i}$ as the (Euclidean) derivative in the direction of $\tau_i$, and now define
\[
\nabla_{\tau_i}^{\mathbf{V}}u=P_{\omega}(\nabla_{\tau_i})
\]
where $P_\omega$ is the orthogonal projection onto $\omega$. From this we can now make the following definition:
\begin{definition}{3.2.1 (Gradient Operator on the Vector Bundle $\mathbf{V}$)}
Given an orthonormal basis $\tau_1,...,\tau_n$ of $T_\omega \Sigma$, for any $u \in C^1(\mathbf{V})$, we define the \textbf{gradient over a vector bundle} to be 
\[
\nabla^{\mathbf{V}}=\sum_{i=1}^n \tau_i \otimes \nabla_{\tau_i}^{\mathbf{V}}u
\]
where $\otimes$ is the typical tenor product. Note that induces the map $\nabla^{\mathbf{V}}: C^1(\mathbf{V}) \rightarrow T_\omega \Sigma \otimes V_w \subset \mathbb{R}^{p_1} \otimes \mathbb{R}^{p_2} \cong \mathbb{R}^{p_1 p_2}$. This assumes the typical identification of $\mathbb{R}^{p_1} \otimes \mathbb{R}^{p_2}$ with $\mathbb{R}^{p_1 p_2}$ induced by the map 
\[
(x^1,...,x^{p_1}) \otimes (y^1,...,y^{p_2}) \mapsto (x^i y^j)_{i=1,...,p_1, j=1,...,p_2}
\]

\end{definition}
From the we obtain the immediate corollary:
\begin{corollary}{3.2.2}
    The gradient operator $\nabla^{\mathbf{V}}: C^1(\mathbf{V}) \rightarrow \mathbb{R}^{p_1 p_2}$ is globally defined on $\Sigma$, and is independent of the choice of basis $\tau_1,...,\tau_n$.
\end{corollary}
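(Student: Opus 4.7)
The plan is to first establish basis independence at each point $\omega \in \Sigma$, from which the global definedness follows by a standard patching argument: orthonormal frames always exist in local coordinate charts on any Riemannian manifold, and if two local frames defined on overlapping neighborhoods both produce the same value of $\nabla^{\mathbf{V}} u$ at every point in the overlap, then these local definitions glue together into a single globally defined map $\Sigma \to \mathbb{R}^{p_1 p_2}$. So the entire content of the corollary reduces to a pointwise linear algebra check.

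To prove basis independence, I would fix $\omega \in \Sigma$ and take two orthonormal bases $\tau_1, \ldots, \tau_n$ and $\tilde{\tau}_1, \ldots, \tilde{\tau}_n$ of $T_\omega \Sigma$, related by an orthogonal matrix $A = (a_{ij})$ via $\tilde{\tau}_i = \sum_j a_{ij} \tau_j$ and satisfying the orthogonality relation $\sum_i a_{ij} a_{ik} = \delta_{jk}$. Two linearity facts drive the computation: the Euclidean directional derivative $\nabla_v u$ of an $\mathbb{R}^{p_2}$-valued function is linear in the direction $v$, and the orthogonal projection $P_\omega$ onto $V_\omega$ is linear. Combining these gives
\[
\nabla^{\mathbf{V}}_{\tilde{\tau}_i} u = P_\omega(\nabla_{\tilde{\tau}_i} u) = \sum_{k=1}^n a_{ik}\, P_\omega(\nabla_{\tau_k} u) = \sum_{k=1}^n a_{ik}\, \nabla^{\mathbf{V}}_{\tau_k} u.
\]
Substituting into the definition of $\nabla^{\mathbf{V}} u$ with respect to the tilded frame and expanding $\tilde{\tau}_i$ via $A$, bilinearity of the tensor product turns the sum $\sum_i \tilde{\tau}_i \otimes \nabla^{\mathbf{V}}_{\tilde{\tau}_i} u$ into $\sum_{j,k}\left(\sum_i a_{ij} a_{ik}\right) \tau_j \otimes \nabla^{\mathbf{V}}_{\tau_k} u$, which collapses to $\sum_j \tau_j \otimes \nabla^{\mathbf{V}}_{\tau_j} u$ by orthogonality of $A$. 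This is exactly $\nabla^{\mathbf{V}} u$ computed in the untilded frame, completing basis independence.

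I do not anticipate any substantial obstacle beyond careful index bookkeeping in the transformation rule, and recognizing that the argument depends only on orthogonality of $A$, not on any further structure of $T_\omega \Sigma$ or $V_\omega$ (in particular, it does not require $A$ to vary smoothly over $\omega$, only to exist pointwise). Once pointwise basis independence is established, the global definedness is essentially formal: the smoothness of $\mathbf{V}$ as a vector bundle over the $C^1$ manifold $\Sigma$ guarantees existence of local orthonormal frames, and no further analytic input is needed for the patching, since the value of $\nabla^{\mathbf{V}} u$ at each $\omega$ is determined by $u$ and $P_\omega$ alone.
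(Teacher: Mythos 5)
Your proof is correct, but it takes a genuinely different route from the paper's. The paper establishes invariance by rewriting the defining sum in a manifestly frame-free form: expanding $u=\sum_j u^j e_j$ in the standard basis of $\mathbb{R}^{p_2}$, it shows
\[
\sum_{i=1}^n \tau_i \otimes P_\omega(\nabla_{\tau_i} u) \;=\; \sum_{j=1}^{p_2} (\nabla^{\Sigma} u^j) \otimes P_\omega(e_j),
\]
and then appeals to the fact that the intrinsic gradient $\nabla^\Sigma u^j$ of each scalar component is already known to be independent of the orthonormal frame. You instead perform the direct change-of-basis computation with an orthogonal matrix $A$, using linearity of $v\mapsto \nabla_v u$ and of $P_\omega$ together with $\sum_i a_{ij}a_{ik}=\delta_{jk}$ to collapse the double sum. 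Your argument is more elementary and self-contained: it does not presuppose the invariance of $\nabla^\Sigma$ (which is itself usually proved by exactly the orthogonality computation you carry out), and you make explicit the patching step from pointwise invariance to a globally defined map, which the paper leaves implicit. What the paper's approach buys in exchange is the closed-form expression $\sum_j (\nabla^\Sigma u^j)\otimes P_\omega(e_j)$, which is not merely a proof device but is reused immediately in Section 3.3 to define $\tilde F(\omega,z,\eta)$ and derive the Euler--Lagrange operator; your computation, while cleaner as a standalone verification, does not produce that identity. Both proofs are complete and correct.
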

\begin{proof}
As $u \in C^k(\mathbf{V})$, we can write $u=\sum_{i=1}^{p_2} u^j e_j$, where $e_j$ is the standard basis vector. Now, we see that $\nabla_{\tau_i} u = \sum_{i=1}^{n} \tau_i (u^j) e_j$. Also, the gradient operator over the manifold $\Sigma$ is given by $\nabla^\Sigma u^j = \sum_{i=1}^n \tau_i (u^j) \tau_i$. In the definition of $\nabla^{\mathbf{V}}$, we have
\[
\nabla^{\mathbf{V}}=\sum_{i=1}^n \tau_i \otimes P_{\omega}(\nabla_{\tau_i}) = \sum_{j=1}^{p_2} (\nabla^{\Sigma} u^j) \otimes P_\omega (e_j)
\]
which is independent of $\tau_1,...,\tau_n$.
\end{proof}

\subsection{The Euler-Lagrange Operator $\mathcal{M}_\mathcal{F}$}
Next, for the sake of using techniques from the calculus of variations, we would like to define a functional over the manifold with $u$-dependence, where $u \in C^1(\mathbf{V})$ given by
\[
\mathcal{F}(u)=\int_{\Sigma} F(\omega, u, \nabla^\mathbf{V} u)
\]
where $F=F(\omega, Q)$ is smooth real-valued function, and $\omega \in \Sigma$, $Q \in \mathbb{R}^{p_1} \times \mathbb{R}^{p_1 p_2}$. Next, we make the following definition which will lead us to the \textit{first variation} of $\mathcal{F}$:
\begin{definition}{3.3.1 (The Euler-Lagrange Operator $\MF$)}
The Euler-Langrange operator $\MF$ for $\mathcal{F}$ is defined on $C^2(\mathbf{V})$ by the following requirements. First, $\MF(u) \in C^0(\mathbf{V})$, and secondly,
\[
{\frac{d}{ds} \mathcal{F}(u+sv) }|_{s=0}=\langle \MF(u), v \rangle_{L^2(\Sigma)}, \hspace{0.5cm} u,v \in C^2(\mathbf{V})
\]
\end{definition}
As we see, this definition does not uniquely determine $\MF$. Using corollary 3.2.2, we can express $\nabla^{\mathbf{V}}$ independent of the choice the basis by instead using the global gradient operator and setting
\[
F(\omega, u, \nabla^{\mathbf{V}})=\Tilde{F}(\omega,u,\nabla^{\Sigma} u^1,..., \nabla^{\Sigma} u^{p_2})
\]
Here, we have $\Tilde{F}(\omega,z,\eta)$, where $z \in \mathbb{R}^q$, and $\eta=(\eta^{(1)},...,\eta^{(p_2)})$, with $\eta^{(\alpha)}=(\eta_1^{\alpha},...,\eta_{p_1}^{\alpha}) \in \R^{p_1}$, and we see that this defines the relation 
\[
\Tilde{F}(\omega, z, \eta^{(1)}, ..., \eta^{(p_2)})=F(\omega,z,\sum_{j=1}^{p_2} \eta^{(j)} \otimes  P_\omega(e_j))
\]
Now we can evaluate the first variation $\frac{d}{ds} \mathcal{F}(u+sv)|_{s=0}$ directly by first noting
\begin{align*}
    \frac{d}{ds} \mathcal{F}(u+sv)|_{s=0} = \int_{\Sigma} F(\omega, u+sv, \nabla^{\Sigma}(u+sv))|_{s=0} d\omega \\ 
    =\int \frac{d}{ds} \Tilde{F}(\omega,u,\nabla^{\Sigma} u^1,..., \nabla^{\Sigma} u^{p_2})_{s=0}
\end{align*}
now by chain rule:
\begin{align*}
    \frac{d}{ds} \Tilde{F}
    =\frac{\partial \Tilde{F}}{\partial u} \frac{d}{ds} u + \sum_{j=1}^{p_1} \frac{\partial \Tilde{F}}{\partial (\nabla^{\Sigma} u^j)} \cdot \frac{d}{ds}\nabla^{\Sigma} u^j \\
    =\frac{\partial \Tilde{F}}{\partial u} v + \sum_{j=1}^{p_1} \frac{\partial \Tilde{F}}{\partial (\nabla^{\Sigma} u^j)} \cdot \nabla^{\Sigma} v^j
\end{align*}
by identifying $\frac{d}{ds} u=v$ and $\frac{d}{ds}\nabla^{\Sigma} u^j = v^j$. Now summing over $\alpha=1$ to $p_2$ and substituting back into the integral, we obtain that 
\[
\frac{d}{ds} \mathcal{F}(u+sv)|_{s=0} = \int_{\Sigma} \sum_{\alpha=1}^{p_2}(\frac{\partial \Tilde{F}}{\partial u^{\alpha}} v^{\alpha} + \sum_{j=1}^{p_1} (\frac{\partial \Tilde{F}}{\partial (\nabla_j u^{\alpha})} \partial_j v^{\alpha}))
\]
where $\nabla_j=e_j \cdot \nabla^{\Sigma}$, $j=1,...,p_1$. Finally, this leads to the equation
\[
\frac{d}{ds} \mathcal{F}(u + sv) |_{s=0} = \int_{\Sigma} \sum_{\alpha=1}^{p_2} ( \sum_{j=1}^{p_1} (\nabla_j v^\alpha) F_{\eta_j}^\alpha (\omega, \tilde{u}, \nabla \tilde{u}) + v^\alpha F_{z_\alpha}(\omega, \tilde{u}, \nabla \tilde{u})) d\omega
\]
by identifying $\Tilde{F}_{\eta_j}^{\alpha}$ and $\Tilde{F}_{z_\alpha}$ with the partial derivatives of $\Tilde{F}$ with the components of $\nabla u$ and $u$ respectively. Now we can employ the integration formula
\[
\int_{\Sigma} \nabla_j f = -\int_{\Sigma} f H_j
\]
on $\Sigma$, where $f \in C^\infty(\Sigma)$ and $(H_1,...,H_{p_1})$ is the mean-curvature vector of $\Sigma$. Since $\Sigma$ can be thought of as a hypersurface embedded in $\mathbb{R}^{p_1}$, if $\text{II}$ is the second fundamental form of $\Sigma$, and $N$ is the unit normal vector, we can write
\[
H=\text{tr}(\text{II})N=\left(\sum_{i=1}^{p_1-1} \text{II}(e_i,e_i) \right)N
\]
where $\text{II}(X,Y)=-\langle \nabla_X N, Y \rangle$, and $X,Y$ are tangent vectors on $\Sigma$. With this, we now see that
\[
\left. \frac{d}{ds} \mathcal{F}(u + sv) \right|_{s=0} = \int_{\Sigma} \sum_{\alpha=1}^{p_2} v^\alpha \left( -\sum_{j=1}^{p_1} \nabla_j \tilde{F}_{\eta_j}^{(\alpha)}(\omega, \tilde{u}, \nabla \tilde{u}) + H_j \hat{F}_{\eta_j}^{(\alpha)}(\omega, \tilde{u}, \nabla \tilde{u}) + \tilde{F}_{z_\alpha}(\omega, \tilde{u}, \nabla \tilde{u}) \right) \, d\omega
\]
Now comparing this equation with the relation
\[
\frac{d}{ds} \mathcal{F}(u+sv)|_{s=0}=\langle \MF, v \rangle_{L^2(\Sigma)}=\int_{\Sigma} \MF \cdot v d\omega
\]
and setting $v^\alpha=e_\alpha$, we obtain the identity
\[
e_\alpha \cdot M_{\mathcal{F}}(u) = \sum_{j=1}^{p_1} \nabla_j \left(F_{\eta_j}^{(\alpha)}(\omega, \tilde{u}, \nabla \tilde{u})\right) - F_{z_\alpha}(\omega, \tilde{u}, \nabla \tilde{u}) - \sum_{j=1}^{p_1} H_j F_{\eta_j}^{(\alpha)}(\omega, \tilde{u}, \nabla \tilde{u})
\]
and hence $\MF$ exists and is described by this relation for each $\alpha=1,...,p_2$. Notice that by a rearrangement, this takes the general form
\[
e_\alpha \cdot M_{\mathcal{F}}(u) = \sum_{j=1}^{p_1} \sum_{\beta=1}^{p_2} F_{\eta_j}^{(\alpha)}(\omega, \tilde{u}, \nabla \tilde{u}) \nabla_j v^\beta - f_\alpha(\omega, \tilde{u}, \nabla \tilde{u})
\]
where $f=f(\omega, z, \eta)$ is a smooth function of $(\omega, z, \eta) \in \Sigma \times \mathbb{R}^p_2 \times \mathbb{R}^{p_1 p_2}$. 
Here, we will always assume that $F$ is defined such that the operator $\MF$ is \textit{elliptic}, that is
\[
\sum_{i,j=1}^{p_1} \sum_{\alpha,\beta=1}^{p_2} \tilde{F}_{{\eta_i}^{(\alpha)}, {\eta_i}^{(\beta)}}(\omega, z, \eta) \lambda_\alpha \lambda_\beta \xi^i \xi^j > 0
\]
for every $\xi=(\xi^1,...,\xi^{p_1}) \in T_\omega \Sigma \setminus \{0 \}$ and for all $\lambda=(\lambda_1,...,\lambda_{p_2}) \in V_\omega \setminus \{0 \}$.

\subsection{The Linearization $\mathcal{L}_\mathcal{F}$}

Now we define the linearization, or \textit{second variation} of the Euler-Lagrange operator $\MF$. 
\begin{definition}{3.4.1 (The Second Variation)}
Assume that $u \in C^2(\mathbf{V})$ is a solution of $\MF(u)=0$ on $\Sigma$, and we define the linearized operator $\mathcal{L}_{\mathcal{F},u}$ of $\MF$ at $u$ by
\[
\mathcal{L}_{\mathcal{F},u}(v)=\frac{d}{ds} \MF(u+sv)|_{s=0}, \hspace{0.5cm} v \in C^2(\mathbf{V})
\]
\end{definition}
Now let $\sigma_0 > 0$, and assume $u \in C^2(\mathbf{V})$ with $|u|_{C^2} < \sigma_0$, and $u_1,u_2 \in C^2(\mathbf{V})$ are arbitrary with the condition that $|u_1|_{C^2},|u_2|_{C^2} \leq \sigma_0$. Note that we can write $\MF(u_j) \equiv \MF (u+(u_j-u))$ for $j=1,2$, and using the following identity using Taylor expansion:
\[
f(1)=f(0)+f'(0)+\int_{0}^1 (1-s) f''(s) ds
\]
and setting $f(s)=\MF(u+s(u_j-u))$, we obtain
\[
\MF(u_j)-\MF=\mathcal{L}_{\mathcal{F},u}(u_j-u)+N(u,u_j)
\]
where
\[
N(u,u_j)=\int_{0}^1 (1-s) \frac{d^2}{ds^2} \MF(u+s(u_j-u)) ds, \hspace{0.5cm} j=1,2
\]
Now using the fact that $|u+s(u_j-u)|_{C^2} \leq \sigma_0$ for all $s \in [0,1]$, we deduce the identity
\[
\MF(u_1)-\MF(u_2)=\mathcal{L}_{\mathcal{F},u}(u_1-u_2)+a\cdot \nabla^2(u_1-u_2)+b\cdot \nabla (u_1-u_2)+c\dot (u_1-u_2)
\]
where
\[
\sup(|a|+|b|+|c|) \leq C(|u_1-u|_{C^2}+|u_2-u|_{C^2})
\]
and $C$ depends only on $\mathcal{F}$ and the choice of $\sigma_0$.
\section{The Liapunov-Schmidt Reduction for $\mathcal{M}_\mathcal{F}$}
Now we turn our attention to the Liapunov-Schmidt reduction for the operator $\MF$. This process involves decomposing the space on which a function acts into orthogonal subspaces, hence rendering non-linear problems in PDEs more manageable. In fact, this reduction proves to be particular effective in the case where the implicit function theorem fails. Essentially, for an arbitrary functions $\mathcal{F}$ defined over a vector bundle, this reduction simplifies infinite-dimensional problems by reducing to a finite setting. 
\subsection{The Operator $\mathcal{N}$}
Here we introduce an operator that induces a kind of orthogonal decomposition of $\MF$ and $\LF$. First we would like to recall the orthogonal projection $P_\Omega:  L^2(\mathbf{V}) \rightarrow \Omega$ for some $\Omega \subset L^2(\mathbf{V})$. For some $v \in L^2(\mathbf{V})$, we define
\[
P_{\Omega}(v)=u \in \Omega \hspace{0.25cm} \text{such that} \hspace{0.25cm} \langle v-u,w \rangle_{L^2(\mathbf{V})}=0 \hspace{0.5cm} \forall w \in \Omega
\]
it is easy to verify that the operator $P_{\Omega}$ is linear, idempotent, and self-adjoint.
\begin{definition}{4.1.1 (The Decomposition $\mathcal{N}$)}
Let $K$ be the finite dimensional kernel of the elliptic operator $\LF$, and let $P_k$ be the orthogonal projection of $L^2(\mathbf{V})$ onto $K$. We define operator $\mathcal{N}: C^{2,\alpha}(\mathbf{V}) \rightarrow C^{0,\alpha}(\mathbf{V})$ to be 
\[
\mathcal{N}u=P_K u+\MF u
\]
\end{definition}
There is an important fact we will now state about $\mathcal{N}$:
\begin{corollary}{4.1.2}
$\mathcal{N}$ is a bijection of a neighborhood $U$ of 0 in $C^{2,\alpha}(\mathbf{V})$ onto a neighborhood $W$ of 0 in $C^{0,\alpha}(\mathbf{V})$.
\end{corollary}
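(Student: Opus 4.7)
The plan is to prove this by applying the inverse function theorem in Banach spaces to $\mathcal{N}: C^{2,\alpha}(\mathbf{V}) \to C^{0,\alpha}(\mathbf{V})$ at the point $0$. First I would verify that $\mathcal{N}$ is of class $C^1$ in a neighborhood of $0$: the projection $P_K$ is bounded linear (since $K$ is finite-dimensional and, by elliptic regularity, consists of smooth sections), while $\MF$, being a quasilinear second-order operator whose coefficients depend smoothly on the unknowns through the smooth integrand $F$, is Fréchet differentiable between the appropriate Hölder spaces. Differentiating at $0$ and using Definition 3.4.1 directly gives
\[
D\mathcal{N}(0) v = P_K v + \LF v.
\]

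The core step is to show that this linearization is a Banach space isomorphism. Because $\LF$ is a self-adjoint elliptic operator on the compact manifold $\Sigma$, standard Fredholm theory provides an orthogonal $L^2$-decomposition $L^2(\mathbf{V}) = K \oplus K^\perp$, with $K = \ker \LF$ finite-dimensional and $\LF$ mapping $K^\perp$ into itself with trivial kernel. Schauder theory on $\Sigma$ then upgrades this to a bounded linear isomorphism $\LF : K^\perp \cap C^{2,\alpha}(\mathbf{V}) \to K^\perp \cap C^{0,\alpha}(\mathbf{V})$. To establish injectivity of $D\mathcal{N}(0)$, I would decompose $v = v_K + v_\perp$ with $v_K \in K$, $v_\perp \in K^\perp$, and note that $P_K v + \LF v = v_K + \LF v_\perp$, where the two summands live in orthogonal subspaces; vanishing of the sum therefore forces $v_K = 0$ and $\LF v_\perp = 0$, so $v_\perp = 0$ as well. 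For surjectivity, given $w \in C^{0,\alpha}(\mathbf{V})$ with decomposition $w = w_K + w_\perp$, I would set $v = w_K + (\LF|_{K^\perp})^{-1}(w_\perp)$, and verify directly that $D\mathcal{N}(0)v = w$.

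With these hypotheses in place, the Banach space inverse function theorem delivers open neighborhoods $U \subset C^{2,\alpha}(\mathbf{V})$ and $W \subset C^{0,\alpha}(\mathbf{V})$ of $0$ such that $\mathcal{N}: U \to W$ is a $C^1$ diffeomorphism, which in particular establishes the claimed bijection.

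The main obstacle will be the elliptic-theoretic input: confirming that $\LF|_{K^\perp}$ is genuinely an isomorphism in the Hölder scale requires simultaneously invoking self-adjointness of $\LF$ on $L^2(\mathbf{V})$, smoothness and finite-dimensionality of $K$, and Schauder a priori estimates on the compact manifold $\Sigma$ transported to sections of the bundle $\mathbf{V}$. A secondary technicality is justifying that the topological splittings $C^{k,\alpha}(\mathbf{V}) = K \oplus \bigl(K^\perp \cap C^{k,\alpha}(\mathbf{V})\bigr)$ for $k = 0, 2$ are continuous; this follows because $K$ consists of smooth sections, so the $L^2$-projection $P_K$ onto it extends to a bounded operator on every Hölder space.
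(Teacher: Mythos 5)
Your proposal follows essentially the same route as the paper: compute $D\mathcal{N}(0) = P_K + \LF$, show this linearization is an isomorphism of $C^{2,\alpha}(\mathbf{V})$ onto $C^{0,\alpha}(\mathbf{V})$, and invoke the Banach-space inverse function theorem. Your execution of the key step is in fact cleaner than the paper's: where the paper's injectivity argument (deducing a ``contradiction unless $K=0$'' from $P_K v = -\LF v$) is garbled, you correctly exploit that $P_K v \in K$ and $\LF v \in K^\perp$ lie in orthogonal subspaces, and you supply the surjectivity and Schauder-theoretic input that the paper passes over silently when it jumps from ``trivial kernel'' to ``isomorphism.''
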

\begin{proof}
    First, $\mathcal{N}(0)=P_K(0)+\MF(0)=0$ as $\MF=0$, and the linearization of $\mathcal{N}$ is given by
    \[
    d\mathcal{N}|_{0}(v) \equiv \frac{d}{ds} \mathcal{N}(sv)|_{s=0}=P_k+\LF
    \]
    Now, solving for $P_k(v)+\LF(v)=0$ yields that $P_k(v)=-\LF(v)$. However, note that $P_k(v) \in K$, and $K=\ker(\LF)$, which would imply that $\LF(v)$ equals the negative part of its kernel in each component, which is a contradiction unless $K=0$. Thus, $\frac{d}{ds} \mathcal{N}(sv)|_{s=0}=P_k+\LF$ has a trivial kernel, and hence $d\mathcal{N}|_{0}$ is an isomorphism of $C^{2,\alpha}(\mathbf{V})$ into $C^{0,\alpha}$ for each $\alpha \in (0,1)$. By the inverse function theorem, which can be applied to the $C^1$ operator $\mathcal{N}:C^{2,\alpha}(\mathbf{V}) \rightarrow C^{0,\alpha}(\mathbf{V})$, we have that $\mathcal{N}$ is a bijection of a neighborhood $U$ of $0$ in $C^{2,\alpha}(\mathbf{V})$ to a neighborhood $W$ of $0$ in $C^{0,\alpha}(\mathbf{V})$, which is what we desired to show.
\end{proof}
Here, we will also define $\Psi=\mathcal{N}^{-1}$ from this neighborhood $W$ onto $U$, and we see that $\Psi$ is a $C^1$ function. From now on, we will assume without loss of generality that
\[
U \subset \{u \in C^{2,\alpha}(\mathbf{V}): \lvert u \rvert_{C^{2, \alpha}} < 1 \}
\]

\subsection{$L^2$ Estimates}

Now onto an important lemma which gives a $L^2$ estimate on $\Psi = \mathcal{N}^{-1}$
\begin{lemma}{4.2.1}
    For a neighborhood $\hat{W} \subset W$ of 0 in $C^{0,\alpha}(\mathbf{V})$, depending on $\mathcal{F}$ alone, we have
    \[
    \lvert \Psi(f_1)-\Psi(f_2) \rvert_{W^{2,2}} \leq C \lvert f_1 - f_2 \rvert_{L^2} \hspace{0.5cm} f-1, f_2 \in W^{2,2}(\mathbf{V})
    \]
    and $C$ depends only on $\mathcal{F}$, and we write the $W^{2,2}$-norm to be
    \[
    \lvert v \rvert_{W^{2,2}}^{2} = \lvert v \rvert_{L^{2}}^{2} + \lvert \nabla v \rvert_{L^{2}}^{2} + \lvert \nabla^2 v \rvert_{L^{2}}^{2}
    \]
\end{lemma}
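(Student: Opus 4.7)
The plan is to convert the desired Lipschitz bound for $\Psi$ into an elliptic $L^2 \!\to\! W^{2,2}$ estimate for the second-order operator $P_K + \mathcal{L}_{\mathcal{F},0}$, which is already known to be invertible at the origin by Corollary 4.1.2. Set $u_j := \Psi(f_j)$, so that by definition of $\mathcal{N}$,
\[
P_K u_j + \mathcal{M}_\mathcal{F}(u_j) = f_j, \qquad j = 1,2.
\]
Subtracting and writing $w := u_1 - u_2$, I get $P_K w + \bigl(\mathcal{M}_\mathcal{F}(u_1) - \mathcal{M}_\mathcal{F}(u_2)\bigr) = f_1 - f_2$.

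Next I would apply the second-variation Taylor identity from the end of Section 3.4 with baseline $u = 0$, which is admissible because $\mathcal{M}_\mathcal{F}(0) = 0$ and because, by continuity of $\Psi$ from $C^{0,\alpha}$ into $C^{2,\alpha}$ at $0$, both $|u_1|_{C^2}$ and $|u_2|_{C^2}$ can be made as small as we please by shrinking $\hat W$. This rewrites the difference of the Euler-Lagrange operators as $\mathcal{L}_{\mathcal{F},0}(w) + a \cdot \nabla^2 w + b \cdot \nabla w + c \cdot w$ with $\sup(|a| + |b| + |c|) \le C(|u_1|_{C^2} + |u_2|_{C^2})$, and so $w$ satisfies
\[
(P_K + \mathcal{L}_{\mathcal{F},0}) w = (f_1 - f_2) - \bigl(a \cdot \nabla^2 w + b \cdot \nabla w + c \cdot w\bigr).
\]

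The heart of the argument is then the coercive estimate $|w|_{W^{2,2}} \le C \bigl|(P_K + \mathcal{L}_{\mathcal{F},0}) w\bigr|_{L^2}$. I would derive it by splitting $w = w_K + w_\perp$ along the $L^2$-orthogonal decomposition $L^2(\mathbf{V}) = K \oplus K^\perp$: on the finite-dimensional smooth subspace $K$ the operator acts as the identity (all norms being equivalent there), while on $K^\perp$ the self-adjoint elliptic operator $\mathcal{L}_{\mathcal{F},0}$ has trivial kernel, as already exploited in Corollary 4.1.2, so standard elliptic regularity for sections of $\mathbf{V}$ yields a bounded $L^2 \!\to\! W^{2,2}$ inverse. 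Plugging the previous display into this estimate gives
\[
|w|_{W^{2,2}} \le C |f_1 - f_2|_{L^2} + C\bigl(|u_1|_{C^2} + |u_2|_{C^2}\bigr) |w|_{W^{2,2}},
\]
and by shrinking $\hat W$ further so that the prefactor on the last term is at most $1/2$, I absorb it into the left-hand side to reach the claimed inequality.

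The step I expect to be most delicate is the coercive elliptic estimate for $P_K + \mathcal{L}_{\mathcal{F},0}$ in the vector-bundle setting. One has to deploy Fredholm theory and second-order elliptic regularity for $\mathcal{L}_{\mathcal{F},0}$ on sections of $\mathbf{V}$, by passing to local trivializations and using the ellipticity condition stated at the end of Section 3.3, and then verify that the finite-rank smoothing projection $P_K$ only perturbs the estimate in a harmless way on the finite-dimensional kernel. Once that technical input is in place, the Taylor expansion of $\mathcal{M}_\mathcal{F}$ and the smallness-and-absorption step are routine.
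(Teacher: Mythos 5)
Your proposal is correct and follows essentially the same route as the paper: set $u_j=\Psi(f_j)$, linearize $\mathcal{M}_\mathcal{F}$ via the Taylor identity of Section 3.4, split along $K\oplus K^\perp$ (using that $\mathcal{L}_\mathcal{F}$ maps into $K^\perp$ and that norms are equivalent on the finite-dimensional $K$), apply the elliptic $L^2\!\to\!W^{2,2}$ estimate on $K^\perp$, and absorb the small quadratic error by shrinking the neighborhood. The only cosmetic difference is that you package the two projected estimates as a single coercive bound for $P_K+\mathcal{L}_{\mathcal{F},0}$, whereas the paper writes the $K$- and $K^\perp$-equations separately and adds the resulting inequalities.
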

\textbf{Remark}: We can take $\hat{W} = W$ as it is just a matter of notation.
\begin{proof}
    Set $u_j=\Psi(f_j)$, and note that
    \[
    P_K(u_j)+\MF(u_j)=f_j, \hspace{0.5cm} f_j \in W
    \]
    since $\mathcal{N}\Psi(f_j)=f_j$. Now according to the final results of Section 3.4, and setting $u=\varphi$, we have
    \[
    P_K(u_2-u_1)+\mathcal{L}_{\mathcal{F}, \varphi}(u_1-u_2) = a\cdot \nabla^2(u_1-u_2)+b \cdot \nabla(u_1-u_2)+c \cdot (u_1-u_2)+f_2-f_1    
    \]
    with the condition that
    \[
    \sup(|a|+|b|+|c|) \leq C(|u_1-\varphi|_{C^2}+|u_2-\varphi|_{C^2}
    \]
    Now we take the projection onto $K$ and $K^{\perp}$, and since $\mathcal{L}_{\mathcal{F}, \varphi}$ only takes values in $K^\perp$, we have
    \[
    P_K(u_2-u-1)=P_K(a\cdot \nabla^2(u_1-u_2)+b \cdot \nabla(u_1-u_2)+c \cdot (u_1-u_2)+f_2-f_1)
    \]
    and 
    \[
    \mathcal{L}_{\mathcal{F},\varphi}((u_2-u_1)^\perp)=(a\cdot \nabla^2(u_1-u_2)+b \cdot \nabla(u_1-u_2)+c \cdot (u_1-u_2)+f_2-f_1)^{\perp}
    \]
    Now given that $\mathcal{L}_{\mathcal{F}, \varphi}$ is elliptic, we can appeal to a standard $L^2$ estimate:
    \[
    \lVert (u_1-u_2)^\perp \rVert_{W^{2,2}} \leq C \lVert (a\cdot \nabla^2(u_1-u_2)+b \cdot \nabla(u_1-u_2)+c \cdot (u_1-u_2)+f_2-f_1)^{\perp} \rVert_{L^2}
    \]
    where $C$ depends only on $\mathcal{F}$. Now in light of the inequality $\sup(|a|+|b|+|c|) \leq C(|u_1-\varphi|_{C^2}+|u_2-\varphi|_{C^2})$, we have that
    \[
    \lVert (u_1-u_2)^\perp \rVert_{W^{2,2}} \leq C(|u_1-\varphi|_{C^2}+|u_2-\varphi|_{C^2}) \lVert u_1-u_2 \rVert_{W^{2,2}}+C\lVert f_1-f_2 \rVert_{L^2}
    \]
    Now by taking $L^2$ norms on both sides of (2), and again using (1), we deduce that
    \[
    \lVert P_K(u_1 - u_2) \rVert_{L^2} \leq C \left( \lVert u_1 - \phi(c) + u_2 - \phi(c) \rVert \right) \lVert u_1 - u_2 \rVert_{w,2} + C \lVert f_1 - f_2 \rVert_{L^2}
    \]
    Now since $K$ is finite dimensional and is spanned by an orthonormal set of smooth functions $\varphi_1,...,\varphi_q$, we have that 
    \[
    \lVert P_K(u_1 - u_2) \rVert_{W^{2,2}} \leq C \left( \lVert u_1 - \phi(c) + u_2 - \phi(c) \rVert \right) \lVert u_1 - u_2 \rVert_{w,2} + C \lVert f_1 - f_2 \rVert_{L^2}
    \]
    and by adding (3), and (4), we finally obtain 
    \[
    \lVert u_1-u_2 \rVert_{W^{2,2}} \leq C(|u_1-\varphi|_{C^2}+|u_2-\varphi|_{C^2}) \lVert u_1 - u_2 \rVert_{W^{2,2}}+C \lVert f_1-f_2 \rVert_L^2
    \]
    and if $|u_1-\varphi|_{C^2}$ and $|u_2-\varphi|_C^2$ are chosen to be small enough (depending on the choice of $\mathcal{F}$) to guarantee 
    \[
    C(|u_1-\varphi|_{C^2}+|u_2-\varphi|_{C^2})<\frac{1}{2}
    \]
    then the conclusion follows.
\end{proof}
\subsection{The Kernel of $\mathcal{M}_\mathcal{F}$}
Now we enter into a rather computationally heavy section in hopes of better understanding the structure of the kernel of $\MF$. First, note that for a given basis of smooth functions $\varphi_1,...,\varphi_k$, that by definition
\[
\mathcal{N}(\Psi(\sum_{j=1}^l \xi^j \varphi_j ))=\sum_{j=1}^l \xi^j \varphi_j
\]
and therefore
\[
(\MF(\Psi(\sum_{j=1}^l \xi^j \varphi_j )))^\perp = 0, \hspace{0.5cm} \sum_{j=1}^l \xi^j \varphi_j \in W
\]
This is because the action $\MF(\Psi(w))=w-P_K(w)$, for $w \in W$. Therefore, any component of $\MF(\Psi(w))$ in $K^\perp$ must be 0 since $P_K(\MF(\Psi(w)))=0$. Now using the lemma, we would like to set $f_1=P_K(u)$ and $f_2=\Psi^{-1}(u)$, where $u$ is chosen such that $u \in U$ and $P_k(u) \in W$. We have
\[
\lVert \Psi(P_K(u))-u \rVert_{W^{2,2}} \leq C \lVert P_K(u)-\Psi^{-1}(u) \rVert_{L^2} \equiv \lVert \MF(u) \rVert_{L^2}
\]
by the definition of $\mathcal{N}$. By substituting $P_K u$ in place of $u$, we have 
\begin{align}
\lVert \Psi(P_K(u))-P_K(u) \rVert_{W^{2,2}} \leq C \lVert P_K u \rVert_{L^2}^2, \hspace{0.5cm} u \in U'
\end{align}
where we define $U'=\{u \in U: P_K u \in W \}=U \cap P_K^{-1}(W \cap K)$,since $\LF P_K u=0$ and therefore
\[
\lVert \MF(P_K u) \rVert_{L^\infty} \leq C \lVert P_K u \rVert_{C^2}^2 \leq C \lVert P_K u \rVert_{L^2}^2
\]
Note also that (1) implies that 
\begin{align}
d \Psi|_{0} \circ P_K=P_K
\end{align}
by the definition of linearization and the bound on $\lVert \Psi(P_K(u))-P_K(u) \rVert_{W^{2,2}}$ for any $u \in U'$. 
Now we define the function $f(\xi)=\mathcal{F}(\Psi(\sum_{j=1}^l \xi^j \varphi_j))$, for $\sum_{j=1}^l \xi^j \varphi_j \in W$. Now for $\eta \in \R^q$, one can verify by direct computation that
\[
\langle \eta, \nabla f(\xi) \rangle_{L^2} \equiv \langle \MF(\Psi(\sum_{j=1}^s \xi^j \phi_j)), d\Psi(\sum_{j=1}^s \xi^j \phi_j)(\sum_{j=1}^r \eta^j \phi_j) \rangle_{L^2}
\]
for $\xi \in \hat{W}$, and $\hat{W}$ is the open neighborhood of $0 \in \R^l$ such that $\xi \in \hat{W} \iff \sum_{j=1}^l \xi^j \varphi_j \in W \cap K$. Similarly, by subtracting off and adding a component of $P_K$ within the inner products, we have that
\begin{align*}
\langle \eta, \nabla f(\xi) \rangle_{L^2} \equiv \langle \MF(\Psi(\sum_{j=1}^l \xi^j \varphi_j)), d\Psi(\sum_{j} \xi^j \varphi_j)(\sum_{j} \eta^j \varphi_j) - P_K(\sum_{j=1}^l \eta^j \varphi_j) \rangle_{L^2}\\ + \langle \MF(\Psi(\sum_{j=1}^l \xi^j \varphi_j)), P_K(\sum_{j=1}^l \eta^j \varphi_j) \rangle_{L^2}
\end{align*}
Notice that by (2), we have that
\[
\lVert d\Psi(\sum_{j} \xi^j \varphi_j)(\sum_{j} \eta^j \varphi_j) - P_K(\sum_{j=1}^l \eta^j \varphi_j) \rVert_{L^2} \leq C|\xi|
\]
by also taking $\sum_j \eta^j \varphi_j$ to be parallel to $\MF(\Psi \sum_{j=1}^l \xi^j \varphi_j$, we have that
\begin{align*}
\lVert \MF(\Psi(\sum_{j=1}^l \xi^j \varphi_j)) \rVert \leq |\nabla f(\xi)| + C|\xi| \lVert \MF(\Psi(\sum_{j=1}^l \xi^j \varphi_j)) \rVert \\
\implies |\nabla f(\xi)| \leq (1+C|\xi|) \lVert \MF(\Psi(\sum_{j=1}^l \xi^j \varphi_j)) \rVert
\end{align*}
by taking $\eta$ to be parallel to $\nabla f(\xi)$ as well. We would for $C|\xi| \leq 1$ if necessary, so we take an even smaller neighborhood of $W$ in order for this to be the case, and then we would conclude
\begin{align}
\frac{1}{2}|\nabla f(\xi)| \leq \lVert \MF(\Psi(\sum_{j=1}^l \xi^j \varphi_j)) \rVert \leq 2|\nabla f(\xi)|
\end{align}
where $\xi \in \hat{W}$. Now if $u \in U'$, we have that
\[
\MF(u)=0 \implies \mathcal{N}(u)=P_K u \implies \Psi(\mathcal{N})(u)=\Psi(P_K u) \implies u=\Psi(P_K u)
\]
Note that $\nabla f(\xi)=0$ with $\xi \in \hat{W}$ such that $\sum_j \xi^j \varphi_j = P_K u$. Thus, if $\xi \in \hat{W}$ with $\nabla f(\xi)=0$, then $\MF(\Psi(\sum_j \xi^j \varphi_j))=0 \implies P_K(\Psi(\sum_j \xi^j \varphi_j))=\mathcal{N}(\Psi(\sum_j \xi^j \varphi_j))=\sum_j \xi^j \varphi_j$. This implies that $\Psi(\sum_j \xi^j \varphi_j) \in U'$ if $\xi \in \hat{W}$. This finally yields a description of the \textbf{kernel of }$\mathbf{\MF}$ inside $U'$, given by 
\[
\ker_{U'}(\MF)= \{u \in U': \MF u = 0 \} = \Psi(\{\sum_{j=1}^l \xi^j \varphi_j : \xi \in \hat{W}, \nabla f(\xi)=0 \})
\]
since $\Psi: C^{0,\alpha} \rightarrow C^{2,\alpha}$ is a $C^1$ diffeomorphism that embeds $W \cap K$ into $u$, we obtain the immediate corollary:
\begin{corollary}{4.3.1}
    The set
    \[
    M := \{\Psi (\sum_{j=1}^l \xi^j \varphi_j): \xi \in \hat{W} \}
    \]
    is $C^1$ submanifold of $U$ of dimension $l$ that contains the entire kernel of $\MF$ in the neighborhood $U'$.
\end{corollary}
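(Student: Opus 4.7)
The plan is to realize $M$ as the image of a $C^1$ embedding from an open subset of $\R^l$ into the Banach space $C^{2,\alpha}(\mathbf{V})$. Specifically, I would define
\[
\Phi: \hat{W} \to C^{2,\alpha}(\mathbf{V}), \qquad \Phi(\xi) = \Psi\!\left(\sum_{j=1}^l \xi^j \varphi_j\right),
\]
and factor it as $\Phi = \Psi \circ L$, where $L: \R^l \to K \subset C^{0,\alpha}(\mathbf{V})$ is the linear map $\xi \mapsto \sum_j \xi^j \varphi_j$. Since $L$ is linear (hence smooth) and $\Psi$ is $C^1$ by Corollary 4.1.2, the chain rule immediately yields $\Phi \in C^1$.

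Next I would verify that $\Phi$ is an immersion: the derivative is $d\Phi_\xi = d\Psi|_{L(\xi)} \circ L$, and since $\varphi_1, \ldots, \varphi_l$ are $L^2$-orthonormal, hence linearly independent, the map $L$ is injective; moreover $d\Psi|_{L(\xi)}$ is a linear isomorphism because $\Psi$ is a local $C^1$ diffeomorphism about $0$. Thus $d\Phi_\xi$ is injective of rank $l$ at every $\xi \in \hat{W}$. To upgrade from immersion to embedding, I would observe that $\Phi$ is injective (as both $L$ and $\Psi$ are injective on their respective domains) and that $\Phi^{-1}: M \to \hat{W}$ is continuous, since one recovers $\xi$ from $u = \Phi(\xi) \in M$ via $\xi^j = \langle \Psi^{-1}(u), \varphi_j \rangle_{L^2(\mathbf{V})}$, which depends continuously on $u$. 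This establishes that $M$ is a $C^1$ submanifold of $U$ of dimension $l$.

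The containment of the kernel, $\ker_{U'}(\MF) \subset M$, is then immediate from the description derived at the end of Section 4.3,
\[
\ker_{U'}(\MF) = \Psi\!\left(\left\{\sum_{j=1}^l \xi^j \varphi_j : \xi \in \hat{W}, \ \nabla f(\xi) = 0\right\}\right),
\]
since this set is visibly a subset of $\Psi\!\left(\left\{\sum_j \xi^j \varphi_j : \xi \in \hat{W}\right\}\right) = M$.

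The main obstacle I anticipate is technical rather than conceptual: ensuring that $d\Psi$ remains a linear isomorphism at every point $L(\xi)$ with $\xi \in \hat{W}$, not only at the origin. The inverse function theorem delivers this only in a possibly smaller neighborhood of $0$, so I would shrink $\hat{W}$ (and correspondingly $W$) if necessary so that invertibility of $d\mathcal{N}$ holds throughout. This adjustment is harmless because the statement is purely local, but it must be threaded consistently through the nested neighborhoods $U, W, U', \hat{W}$ already fixed in the previous subsections.
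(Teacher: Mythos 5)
Your proof is correct and follows essentially the same route as the paper: there, $M$ is likewise obtained as the image of the finite-dimensional slice $W \cap K$ under the $C^1$ diffeomorphism $\Psi$, with the kernel containment read off directly from the description of $\ker_{U'}(\MF)$ derived at the end of Section 4.3. You simply make explicit the immersion/embedding verification (and the harmless shrinking of neighborhoods) that the paper dismisses as ``immediate.''
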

\begin{figure}[H] 
    \centering
    \includegraphics[width=0.33\linewidth]{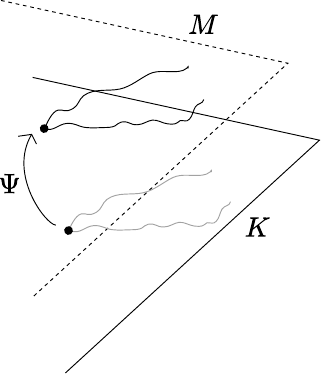}
    \caption{The Liapunov-Schmidt Reduction}
    \label{fig:my_label}
\end{figure}
\subsection{Approximations for $\mathcal{F}$}
Here, recall the previous definition of $f$:
\[
f(\xi)=\mathcal{F}(\Psi (\sum_{j=1}^l \xi^j \varphi_j))
\]
where $\sum_{j=1}^l \xi^j \varphi_j \in W$. We would like to example how well $f$ approximates the functional $\mathcal{F}$ near the origin. Let $u \in U$, and $P_K u \in W$. Note that we have
\begin{align*}
|\mathcal{F}(u)-\mathcal{F}(\Psi(P_K u))| \\ 
=|\int_{0}^1 \frac{d}{ds}\mathcal{F}(u+s(\Psi(P_K u)-u)) ds| \\
= \langle \MF(u+s(\Psi(P_K u) - u)), \Psi(P_K u)-u \rangle_{L^2(\mathbf{V})}
\end{align*}
by the definition of the operator $\MF$ relating the $L^2$ inner product to the given integral. We have by direct computation with the definition of $e_\alpha \cdot \MF$ that
\[
\lVert \MF(u+s(\Psi(P_K u)-u)) - \MF(u) \rVert_{L^2} \leq C \lVert \Psi(P_K u) - u \rVert_{W^{2,2}}^2
\]
lastly, employing the fact from before that $\lVert \Psi(P_K u)-u \rVert_{W^{2,2}} \leq \lVert \MF u \rVert_{L^2}$, we obtain
\begin{align}
|\mathcal{F}(u)-\mathcal{F}(\Psi(P_K u))| \leq C \lVert \MF u \rVert_{L^2}^2
\end{align}
\section{Applying the Łojasiewicz Inequalities to $\mathcal{F}$}
Consider an arbitrary functional $\mathcal{F}$, such that $\mathcal{F}=\int_{\Sigma} F(\omega, u, \nabla^{\mathbf{V}} u)$, where $F=F(\omega, Q)$ is $C^\infty$ and real-valued, $\omega \in \Sigma$, $Q \in \mathbb{R}^{p_1} \times \mathbb{R}^{p_1 p_2}$, and $|Q| < \sigma_0$ for some given $\sigma_0>0$. We establish the conditions for the real analyticity of $\mathcal{F}$:
\subsection{Real-Analyticity Conditions}
For a given $\mathcal{F}$, we assume that $F=F(\omega, z, \eta)$, where $w \in \Sigma$, $z \in \mathbb{R}^{p_2}$, $\eta \in \R^{p_1 p_2}$ is smooth, and that all derivatives with respect to $\omega$ up to order 3 are real-analytic functions of $z, \eta$.
From this we have our first condition: assume that for each $(z_0, \eta_0) \in \mathbb{R}^{p_2} \times \mathbb{R}^{p_1 p_2}$ and for each $j=0,1,2,3$, there exists $C^\infty$ functions $\{a_{\alpha \beta} \}$ on $\Sigma$ correspending to the arbitrary multi-indicies $\alpha=(\alpha_1,...,\alpha_{p_1})$, $\beta=(\beta_1,...,\beta_{p_1 p_2})$ and $\sigma>0$ such that
\[
\sum_{m=0}^{\infty} \left( \sum_{\substack{\alpha + \beta = m}} \sup_{\omega \in \Sigma} \left| D^\alpha a_{\alpha \beta}(\omega) \right|^m \right) < \infty
\]
and
\[
F(\omega, z, \eta) = \sum_{m=0}^{\infty} \sum_{\substack{|\alpha| + |\beta| = m}} a_{\alpha \beta}(\omega) (z - z_0)^\alpha (\eta - \eta_0)^\beta
\]
for $|z-z_0|+|\eta-\eta_0|<\sigma$.
\subsection{The Inequality for $C^3(\mathbf{V})$}
Referencing Section 4.1, we can apply the implicit function theorem given the conditions above on the complexified spaces $\mathbb{C} \otimes C^{2,\alpha}(\mathbf{V})$ and $\mathbb{C} \otimes C^{0,\alpha}(\mathbf{V})$, and so $\Psi=\mathcal{N}^{-1}$ is defined on a neighborhood $U_{\mathbb{C}}$ of $0$ and $C^1$, meaning that for any fixed $u_j \in C^{0,\alpha}(\mathbf{V})$, $j=1,...,R$, the complex derivatives $\frac{\partial}{\partial z^k} (\sum_{j=1}^R z^j u_j)$ are all defined as continuous maps from $U_\mathbb{C}$ into $C_{\mathbb{C}}^{2,\alpha}(\mathbf{V})$. Specifically, the function
\[
f_{\mathbb{C}}(z^1,...,z^n) := \mathcal{F}(\Psi(\sum_{j=1}^l z^j \varphi_j))
\]
is holomorphic in the variable $z=(z^1,...,z^n)$ in some neighborhood of 0 in $\mathbb{C}^l$. Therefore, the real-valued function $f(\xi)=\mathcal{F}(\Psi(\sum_{j=1}^l \xi^j \varphi_j))$ is real-analytic in some neighborhood 0 in $\R^l$. Hence, we can apply the second Łojasiewicz inequality with constants $\alpha \in (0,1]$ and $C,\sigma > 0$ to obtain
\begin{align}
|f(\xi)|^{1-\frac{\alpha}{2}} \leq C|\nabla f(\xi)| \hspace{0.5cm} \forall \xi \in B_\sigma(0)
\end{align}
where $B_\sigma(0)$ is a ball in $\R^l$ of radius $\sigma$ centered at the origin. Now by (4) of Section 4.4, with $u \in U$ chosen such that $\xi^j=\langle u,\varphi \rangle_{L^2}$ satisfy $|\xi| < \sigma$, we have that
\[
|\mathcal{F}(u)-f(\xi)| \leq C \lVert \MF(u) \rVert_{L^2}^2
\]
and so by (5) and (3) of Section 4.3, we have
\[
|\mathcal{F}(u)|^{1-\alpha/2} \leq C(\lVert \MF(u) \rVert_{L^2} + \lVert \MF(u) \rVert_{L^2}^{2-\alpha}) \leq C(\lVert \MF(u) \rVert_{L^2})
\]
for each $u \in U$ such that $\lVert P_K(u) \rVert < \sigma$. Formally, $\exists \sigma > 0$ such that 
\[
|\mathcal{F}(u)|^{1-\alpha/2} \leq C \lVert \MF(u) \rVert_{L^2} \hspace{0.5cm} \forall u \in C^3(\mathbf{V}), \lVert u \rVert_{C^3} < \sigma_0
\]
which is the Łojasiewicz inequality we desired to show for $C^3(\mathbf{V})$.
\subsection{Non-Real-Analytic Functionals}
Now would would like to examine the conditions for which the inequality for $C^3(\mathbf{V})$ holds with best exponent $(0,1]$ without any real-analyticity hypothesis. For this, we state one simple theorem pertaining to this condition:
\begin{theorem}{5.3.1}
    Given $\mathcal{F} \in C^\infty(C^1(\mathbf{V}); \R)$ but not real-analytic, if we assume the "integrability condition"
    \[
    \mathcal{F}(\Psi(P_K(u))) \equiv 0 \hspace{0.2cm} \text{on some} \hspace{0.2cm} C^3\text{-neighborhood of} \hspace{0.2cm} 0 \hspace{0.2cm} \text{in}  \hspace{0.2cm} C^3(\mathbf{V})
    \]
    then indeed,
    \[
    |\mathcal{F}(u)|^{1/2} \leq C \lVert \MF(u) \rVert_{L^2} \hspace{0.5cm} \forall u \in C^3(\mathbf{V}), \lVert u \rVert_{C^3} < \sigma_0
    \]
    \textbf{Remark:} the integrability condition is equivalent to the assuption that there exists a $C^\infty$ $l$-dimensional manifold of solutions of the non-linear equation $\MF(u)=0$ which is tangent $K=\ker(\LF)$ at 0.
    \begin{proof}
        Assuming that $\mathcal{F}(\Psi(P_K(u))) \equiv 0$ on some $C^3$ neighborhood of 0 in $C^3(\mathbf{V})$ and appealing to (4) of Section 4.3, we have that 
        \begin{align*}
        |\mathcal{F}(u)|=|\mathcal{F}(u)-\mathcal{F}(\Psi(P_K(u)))| \leq C \lVert \MF(u) \rVert_{L^2}^2 \\
        \implies |\mathcal{F}(u)|^{1/2} \leq C \lVert \MF(u) \rVert_{L^2}
        \end{align*}
        as required.
    \end{proof}
\end{theorem}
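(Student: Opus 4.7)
The plan is to bypass the usual route through the classical Łojasiewicz inequality on the finite-dimensional reduction (which required real-analyticity so that $f(\xi) = \mathcal{F}(\Psi(\sum \xi^j \varphi_j))$ inherited analyticity from $F$) and instead use the integrability hypothesis to kill the finite-dimensional piece outright. The key observation is that in the general approach of Section 5.2, the quantity $\mathcal{F}(\Psi(P_K u))$ plays the role of the finite-dimensional value $f(\xi)$; if this quantity vanishes identically, no analytic Łojasiewicz inequality is needed on the reduction, and one recovers the inequality directly from the quadratic approximation bound (4) of Section 4.4.

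Concretely, I would proceed in three steps. First, invoke inequality (4) from Section 4.4, namely
\[
|\mathcal{F}(u) - \mathcal{F}(\Psi(P_K u))| \leq C \lVert \MF(u) \rVert_{L^2}^2,
\]
which was derived without any analyticity hypothesis and requires only that $u \in U$ and $P_K u$ lies in the neighborhood where $\Psi$ is defined; these containments hold once $\lVert u \rVert_{C^3}$ is sufficiently small, using the continuous embedding $C^3 \hookrightarrow C^{2,\alpha}$. Second, apply the integrability condition $\mathcal{F}(\Psi(P_K u)) \equiv 0$ to reduce the left-hand side to $|\mathcal{F}(u)|$. Third, take square roots to obtain
\[
|\mathcal{F}(u)|^{1/2} \leq C^{1/2} \lVert \MF(u) \rVert_{L^2},
\]
which is the desired inequality with the optimal exponent $\theta = 1/2$.

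There is no serious obstacle in the argument itself, as all the analytic work is already packaged into Section 4.4. The only subtle point worth addressing carefully is choosing a single $\sigma_0 > 0$ small enough to ensure simultaneously that $u$ lies in the domain $U$ of $\mathcal{N}$, that $P_K u$ lies in $W$, and that $\Psi(P_K u)$ lies in the $C^3$-neighborhood on which the integrability hypothesis is assumed to hold; this reconciliation is immediate since $P_K$ and $\Psi$ are continuous between the relevant function spaces. Finally, it is worth noting in a remark why this integrability assumption is geometrically natural: it amounts to saying that $M$ (from Corollary 4.3.1) is an actual submanifold of solutions of $\MF(u) = 0$, tangent to $K = \ker(\LF)$ at the origin, so the obstructions that real-analyticity was handling via a nontrivial $f(\xi)$ simply do not appear.
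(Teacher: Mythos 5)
Your proposal is correct and follows essentially the same route as the paper: both arguments invoke the quadratic approximation estimate $|\mathcal{F}(u)-\mathcal{F}(\Psi(P_K u))| \leq C \lVert \MF(u) \rVert_{L^2}^2$ from Section 4.4, use the integrability condition to annihilate the $\mathcal{F}(\Psi(P_K u))$ term, and take square roots. Your additional care about choosing $\sigma_0$ so that $u \in U$ and $P_K u \in W$ simultaneously is a worthwhile refinement the paper leaves implicit, but it does not change the substance of the argument.
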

\section{The Łojasiewicz Inequality for $\mathcal{E}_{S^{n-1}}$}
This is where the theory of Łojasiewicz Inequalities on arbitrary real-analytic functions on vector bundles crosses over with the study of energy minimizing maps. Recall the following definition of the energy functional, yet for maps $u \in C^\infty(S^{n-1}; N)$ is now given by
\[
\mathcal{E}_{S^{n-1}}(u)=\int_{S^{n-1}} |Du|^2
\]
However, for the energy functional applied to the sphere, the smooth maps $C^\infty(S^{n-1}; N)$ do not form a linear space. Therefore, we must show that for maps that are $C^3$ close to a given harmonic map $\varphi_0 \in C^\infty(S^{n-1};N)$, we can write $\mathcal{E}$ as a functional over a vector bundle.
\begin{figure}[H] 
    \centering
    \includegraphics[width=0.75\linewidth]{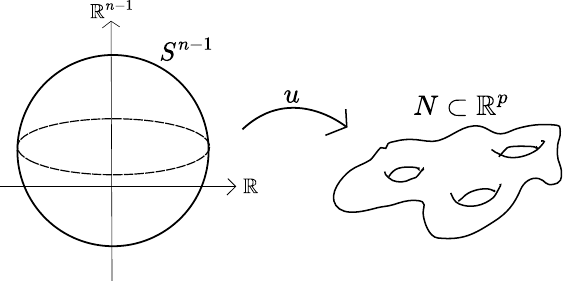}
    \caption{\textit{The action of} $u \in C^\infty(S^{n-1}; N)$}
    \label{fig:my_label}
\end{figure}
\subsection{Defining the Functional $\mathcal{F}$}
Consider a fixed $\varphi_0 \in C^\infty(S^{n-1};N)$ which is a harmonic map. By Section 2.1, we have the following result:
\[
\varphi_0 \hspace{0.2cm} \text{is harmonic} \iff \Delta_{s^{n-1}}\varphi_0 + A_{\varphi_0}(D_\omega \varphi_0, D_\omega \varphi_0) = 0
\]
\textbf{Note:} $A_{\varphi_0}(D_\omega \varphi_0, D_\omega \varphi_0)$ is shorthand for $\sum_{j=1}^{n-1} A_{\varphi_0} (\nabla_{\tau_i} \varphi_0, \nabla_{\tau_i} \varphi_0)$, where $\tau_1,...\tau_{n-1}$ is an orthonormal basis for $T_{\omega} S^{n-1}$. Before continuing on with further definitions, the crux of this chapter can be stated as follows:
\begin{theorem}{6.1.1}
    For maps $u \in C^{\infty}(S^{n-1}; N)$ such that $\forall \epsilon > 0$
    \[
    \lVert u - \varphi_0 \rVert_{C^3}
    \]
    for a given harmonic map $\varphi_0 \in C^{\infty}(S^{n-1}; N)$, the energy functional $\mathcal{E}_{S^{n-1}}$ can be expressed as a functional $\mathcal{F}$ over a vector bundle of a compact $C^1$ Riemannian manifold.
\end{theorem}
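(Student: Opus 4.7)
The plan is to construct a vector bundle $\mathbf{V}$ over $\Sigma = S^{n-1}$ whose small sections parametrize $C^\infty$ maps $u : S^{n-1} \to N$ that are $C^3$-close to $\varphi_0$, and then verify that $\mathcal{E}_{S^{n-1}}$ pulls back to a functional of the form described in Section 3.3. Take $\Sigma = S^{n-1}$, viewed as a compact $(n-1)$-dimensional Riemannian submanifold of $\mathbb{R}^n$, and for each $\omega \in \Sigma$ define the fiber
\[
V_\omega := T_{\varphi_0(\omega)} N \subset \mathbb{R}^p.
\]
Since $\varphi_0$ is smooth and $N$ is a smooth isometrically embedded submanifold of $\mathbb{R}^p$, the orthogonal projection $P_\omega : \mathbb{R}^p \to V_\omega$ is a $C^\infty$ function of $\omega$, so $\mathbf{V} = \bigcup_{\omega \in \Sigma} V_\omega$ fits the framework of Section 3.1 with $p_1 = n$, $p_2 = p$, and $q = \dim N$.

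Second, I would identify small sections of $\mathbf{V}$ with nearby maps via the nearest point projection $\Pi$ from Definition 2.1. For $v \in C^\infty(\mathbf{V})$ with $\sup |v|$ smaller than the tubular-neighborhood width of $N$, set
\[
u_v(\omega) := \Pi\bigl(\varphi_0(\omega) + v(\omega)\bigr),
\]
which lies in $N$ and is $C^3$-close to $\varphi_0$ whenever $v$ is $C^3$-small. Conversely, for any $u \in C^\infty(S^{n-1}; N)$ with $\lVert u - \varphi_0 \rVert_{C^3} < \epsilon$ small enough, the implicit function theorem applied to the map $v \mapsto \Pi(\varphi_0 + v) - u$ (equivalently, a pointwise use of the exponential map on $N$ based at $\varphi_0$) yields a unique $v \in C^\infty(\mathbf{V})$ with $u = u_v$. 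This identification works precisely because $V_\omega = T_{\varphi_0(\omega)} N$ captures the linear infinitesimal deformations of $\varphi_0$ that remain in $N$ after projection.

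Third, I would compute the pulled-back integrand. For an orthonormal frame $\tau_1, \ldots, \tau_{n-1}$ of $T_\omega S^{n-1}$, the chain rule gives
\[
\nabla_{\tau_i} u_v = d\Pi_{\varphi_0 + v}\bigl(\nabla_{\tau_i} \varphi_0 + \nabla_{\tau_i} v\bigr).
\]
The Euclidean derivative $\nabla_{\tau_i} v$ decomposes into its tangential part $\nabla_{\tau_i}^{\mathbf{V}} v = P_\omega \nabla_{\tau_i} v$ (the bundle gradient of Section 3.2) and a normal part, which upon differentiating the identity $v = P_\omega v$ reduces to $(\nabla_{\tau_i} P_\omega) v$, a smooth $\omega$-dependent linear function of $v$ alone. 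Thus $|Du_v|^2$ is a smooth function of $\bigl(\omega, v, \nabla^{\mathbf{V}} v\bigr)$, say $F\bigl(\omega, v, \nabla^{\mathbf{V}} v\bigr)$, and
\[
\mathcal{E}_{S^{n-1}}(u_v) = \int_{S^{n-1}} F\bigl(\omega, v, \nabla^{\mathbf{V}} v\bigr) \, d\omega =: \mathcal{F}(v),
\]
which is precisely the type of functional studied in Section 3.3.

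The main obstacle I anticipate is bookkeeping: verifying that $F$ is jointly smooth in $(\omega, z, \eta)$ on a neighborhood large enough that every $C^3$-small $u$ corresponds to an admissible section, and that the decomposition $\nabla_{\tau_i} v = \nabla_{\tau_i}^{\mathbf{V}} v + (\nabla_{\tau_i} P_\omega) v$ is frame-independent and globally valid on $S^{n-1}$. Smoothness of $F$ will follow once one tracks that $\Pi$ is $C^\infty$ on a fixed tubular neighborhood of $N$, that $\varphi_0$ and the $P_\omega$ together with their derivatives are smooth in $\omega$, and that $d\Pi$ acts as the identity on tangent vectors to $N$ (so the leading order of $F$ at $v=0$ recovers $|D\varphi_0|^2$). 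Granting these checks, the real-analyticity and ellipticity hypotheses needed to invoke the Łojasiewicz machinery of Section 5 transfer to $F$ from the corresponding real-analytic structure of $N$, $\Pi$, and $\varphi_0$.
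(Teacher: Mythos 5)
Your proposal follows essentially the same route as the paper: it builds the pullback bundle $\mathbf{V}=\varphi_0^{*}TN$ with fibers $V_\omega=T_{\varphi_0(\omega)}N$, identifies $C^3$-close maps $u$ with small sections $v$ via $u=\Pi(\varphi_0+v)$ using the nearest point projection, and rewrites $|Du|^2$ as a smooth integrand $F(\omega,v,\nabla^{\mathbf{V}}v)$. Your extra care in splitting $\nabla_{\tau_i}v$ into $P_\omega\nabla_{\tau_i}v$ plus the zeroth-order term $(\nabla_{\tau_i}P_\omega)v$ is a detail the paper absorbs implicitly into the $z$-dependence of $F$, but the argument is the same.
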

\begin{proof}
    Assume $\delta>0$ and $y_0 \in N$, and let
    \begin{align*}
        T_{\delta}(y_0)=\{\tau \in T_{y_0}N: |\tau| < \delta \} \\
        U_{\delta}(y_0)=\{\Pi(y_0 + \tau): \tau \in T_{\delta}(y_0) \}
    \end{align*}
    where $\Pi$ is the nearest point projection Then for $\delta$ depending only on the target manifold $N$, $U_{\delta}(y_0)$ is a neighborhood of $y_0$ in $N$. Moreover, since $\Pi$ is a smooth (real-analytic of $N$ is real analytic) map, then the mapping given by 
    \[
        \Phi_{y_0}^{-1}(y): \tau \rightarrow \Pi(y_0 + \tau)
    \]
    is a smooth diffeomorphism of $T_\delta (y_0)$ onto $U_{\delta}(y_0)$. Further, $\Phi_{y_0}^{-1}(y)$ depends smoothly or real analytically on $(y_0,y) \in N \times N$, contingent on whether $N$ is real-analytic or merely $C^\infty$. Now define the vector bundle 
    \[
    \mathbf{V}=\varphi_0^* TN \equiv \{T_{\varphi_0(\omega)}N \}_{\omega \in S^{n-1}}
    \]
    Notice if $u \in C^2(S^{n-1}; N)$ with $\lVert u-\varphi_0 \rVert_{C^2} < \delta$, the we can leverage the definition of $\Phi_{y_0}$ to write for $\omega \in S^{n-1}$
    \begin{align*}
    u(\omega)=\Pi(\varphi_0 + \Phi_{\varphi_0(\omega)}^{-1}(u(\omega))) \\
    =\Pi(\varphi_0 + \tilde{u})
    \end{align*}
    where $\tilde{u}(\omega)=\Phi_{\varphi_0(\omega)}^{-1}(u(\omega)) \in C^2(\mathbf{V})$. Furthermore, we have that
    \[
    \mathcal{E}_{S^{n-1}}(u)=\mathcal{E}_{S^{n-1}}(\Pi(\varphi_0+\tilde{u}))
    \]
    Now the expression $\mathcal{E}_{S^{n-1}}(\Pi(\varphi_0+\tilde{u}))$ takes the form of a functional \[
    \int_{S^{n-1}} F(\omega, \tilde{u}, \nabla^{\mathbf{V}} \tilde{u})
    \]
    where $\mathbf{V}=\{V \}_{\omega \in S^{n-1}}$, $V_\omega = T_{\varphi_0(\omega)}N$ and
    \[
    F=F(\omega,z,\eta), \hspace{0.25cm} \omega \in S^{n-1}, z \in \mathbb{R}^p, \eta \in \mathbb{R}^{np}
    \]
    Now by using the definition 3.1.1 for the gradient operator on a vector bundle $\mathbf{V}$, and by relating $\mathcal{E}_{S^{n-1}}(\Pi(\phi_0+\tilde{u}))$ to the functional form $\int_{S^{n-1}} F(\omega, \tilde{u}, \nabla^{\mathbf{V}} \tilde{u})$, we obtain that
    \[
    F(\omega, z, \eta)=|d\Pi_{\varphi_0(\omega)+z}((\sum_{\alpha=1}^p \nabla^{S^{n-1}} \varphi_0^\alpha (\omega) \otimes P_\omega e_\alpha)+\eta)|^2
    \]
    Considering that $\Pi$ is smooth (and real-analytic is $N$) is real analytic, then $F$ is a $C^\infty$ function of $\omega, z, \eta$ for $|z|<\delta$. Now we would like to make the following definition for the functional $\mathcal{F}$:
    \[
    \mathcal{F}(u)=\int_{S^{n-1}}(F(\omega,u,\nabla^{\mathbf{V}})-F(\omega,0,0))
    \]
    Notice that
    \[
    \mathcal{E}_{S^{n-1}}(u)-\mathcal{E}_{S^{n-1}}(\varphi_0)=\mathcal{F}(\tilde{u})=\mathcal{E}_{S^{n-1}}(\Pi(\varphi_0+\tilde{u}))-\mathcal{E}_{S^{n-1}}(\varphi_0)
    \]
    for $\tilde{u}$ such that $u=\Pi(\varphi_0+\tilde{u})$. In other words, for a function $v \in C^2(\mathbf{V})$ such that $\lVert v \rVert_{C^2} \leq \delta$, we conclude
    \[
    \mathcal{F}(v)=\mathcal{E}_{S^{n-1}}(\Pi(\varphi_0+v))-\mathcal{E}_{S^{n-1}}(\varphi_0)
    \]
\end{proof}
\subsection{The Euler-Lagrange Multiplier for $\mathcal{E}_{S^{n-1}}(u)$}
Referencing Section 2.1 on Variational Equations, we that for the 1-parameter family $\{u_s\}_{s \in (-\delta, \delta)}$ of maps of $B_\rho(y)$ into $N$ such that $u_0=u$, $Du_s \in L^2(\Omega)$, and $u_s \equiv u$ in a neighborhood of $\delta B_\rho (y)$ for each $s \in (-\delta,\delta)$. We observe that for $u$ energy minimizing, $\mathcal{E}_{B_\rho(y)}(u_s)$ attains its minimum at $s=0$, and
\[
{\frac{d \mathcal{E}_{B_\rho(y)}(u_s)}{ds}}|_{s=0}=0
\]
Further on in the discussion, we also obtain the fact that
\[
\Delta u + \sum_{i=1}^n A_u (D_i u, D_i u) = 0
\]
for $u$ energy minimizing. From this, we deduce that while working over the space $S^{n-1}$ and using the correct gradient operator, this condition can be extrapolated to obtain the Euler-Lagrange operator for the energy functional:
\begin{definition}{6.2.1 (The Euler-Lagrange Operator $\mathcal{M}_{\mathcal{E}_{S^{n-1}}}$)}
The Euler-Lagrange operator for the energy functional is exactly
\[
\mathcal{M}_{\mathcal{E}_{S^{n-1}}}=\nabla_{S^{n-1}}u+A_u(D_\omega, D_\omega)
\]
and it satisfies the identity
\[
\frac{d}{ds} \mathcal{E}_{S^{n-1}}(\Pi(u+sv))|_{s=0}=-\langle \mathcal{M}_{\mathcal{E}_{S^{n-1}}}, v \rangle_{L^2}, \hspace{0.25cm} v=(v^1,...,v^p) \in C^2(S^{n-1}; \mathbb{R})
\]
\end{definition}
Now by combining this with the definition for $\mathcal{F}(\tilde{u})$ given previously, and the fact that $\frac{d}{ds} \mathcal{F}(u+sv)|_{s=0}=-\langle \MF(u), v \rangle_{L^2}$, we have the following relation:
\[
\langle \mathcal{M}_{\mathcal{E}_{S^{n-1}}}(u), v \rangle_{L^2} = \langle \MF(\tilde{u}), v \rangle_{L^2}
\]
for $v \in C^2(\mathbf{V})$. Now notice that taking the orthogonal projection onto $T_{\varphi_0}N$ annihilates the second component of the inner product, namely $v \in C^2(\mathbf{V})$, and hence we have
\[
(M_{\epsilon_{s_n-1}}(u))^{T_0} \equiv M_f(u), \quad u \in C^2(S^{n-1}; N), \quad \|u - \phi_0\|_{C} < \delta,
\]
where $(\cdot)^{T_0}$ is the orthogonal projection onto $T_{\varphi_0(\omega)}N$. Using the exact definition for $\mathcal{M}_{\mathcal{E}_{S^{n-1}}}$ as before, we have that 
\[
\mathcal{M}_{\mathcal{E}_{S^{n-1}}}=(\Delta u)^T
\]
and $(\cdot)^T$ means orthogonal projection onto $T_u(\omega)N$. Because we assumed $\lVert u - \varphi_0 \rVert_{C^2} < \delta$, we have the bounds
\[
(1-C\delta)|\mathcal{M}_{\mathcal{E}_{S^{n-1}}}(u)| \leq |\MF(\tilde{u})| \leq |\mathcal{M}_{\mathcal{E}_{S^{n-1}}}(u)|
\]
\subsection{Linearization of $\mathcal{M}_{\mathcal{E}_{S^{n-1}}}$ and Final Estimates}
Recall the identity
\[
\mathcal{F}(v)=\mathcal{E}_{S^{n-1}}(\Pi(\varphi_0+v))-\mathcal{E}_{S^{n-1}}(\varphi_0)
\]
and by taking the mixed partial derivative $\frac{\partial^2}{\partial s \partial t}$ on both sides considering the transformation $v \rightarrow sv+tw$, we have
\[
\mathcal{L}_{\varphi_0}(v) \equiv \mathcal{L}_{\mathcal{F}}(v)
\]
where $\mathcal{L}_{\varphi_0}(v)=\frac{d}{ds}\mathcal{M}_{\mathcal{E}_{S^{n-1}}}(\Pi(\varphi_0+sv))|_{s=0}$ is the linearization of $\mathcal{M}_{\mathcal{E}_{S^{n-1}}}$ at $\varphi_0$ and $\LF(v)=\frac{d}{ds} \MF(sv)|_{s=0}$ is the linearization of $\MF$ at 0. It is easy to verify that $\LF$ is elliptic, and as it has the second orfer term $\Delta v$, we can apply the theory set out in this paper to the functional $\mathcal{F}$. Assuming that $N$ is real-analytic, which renders $\mathcal{F}$ real-analytic, we can apply the Łojasiewicz inequality for arbitrary real-analytic functions in Section 4.2 to obtain
\begin{align*}
    |\mathcal{F}(\tilde{u})|^{1-\alpha/2} \leq C \lVert \mathcal{M}_{\mathcal{F}} (\tilde{u}) \rVert_{L^2} \\
    \implies |\mathcal{E}_{S^{n-1}}(u)-\mathcal{E}_{S^{n-1}}(\varphi_0)|^{1-\alpha/2} \leq C \lVert \mathcal{M}_{\mathcal{F}} (\tilde{u}) \rVert_{L^2} \\ \leq C \lVert \mathcal{M}_{\mathcal{E}_{S^{n-1}}} (u) \rVert_{L^2}
\end{align*}
for $u \in C^\infty(S^{n-1}; N)$ with $\lVert u-\varphi_0 \rVert_{C^3} < \sigma$. 
In the discussion of Section 4.3, we also discussed the case where the target manifold $N$, and by extension $\mathcal{F}$, are merely smooth. In this case, assuming that $\mathcal{F}$ still satisfies the "integrability condition"
\[
    \mathcal{F}(\Psi(P_K(u))) \equiv 0 \hspace{0.2cm} \text{on some} \hspace{0.2cm} C^3\text{-neighborhood of} \hspace{0.2cm} 0 \hspace{0.2cm} \text{in}  \hspace{0.2cm} C^3(\mathbf{V})
\]
then Łojasiewicz with best exponent $\alpha=1$ holds, i.e. in this particular instance, there are $C, \sigma > 0$ such that 
\[
|\mathcal{E}_{S^{n-1}}(u)-\mathcal{E}_{S^{n-1}}(\varphi_0)|^{1/2} \leq C \lVert \mathcal{M}_{\mathcal{E}_{S^{n-1}}}(u) \rVert_{L^2}
\]
for $u \in C^\infty(S^{n-1};N)$ s.t. $\lVert u - \varphi_0 \rVert_{C^3} < \sigma$.

\section{Open Problems}
Here we present three open problems pertaining to application of Simon-Łojasiewicz inequalities over vector bundles:

\begin{op}{(Functionals on Sections)}
Is there a formulation for a global version of the Simon-Łojasiewicz inequalities for functionals defined on sections of vector bundles over compact Riemannian Manifolds? Specifically, given a manifold $\Sigma$ as defined in Section 3 and $\Gamma(\Sigma)$ the space of smooth sections of $\Sigma$ and $\mathcal{F}: \Gamma(\Sigma) \rightarrow \mathbb{R}$, determine the conditions for which there is a global inequality of the form
\[
|\mathcal{F}(\sigma)-\mathcal{F}(\sigma_0)|^{\alpha} \leq C|\nabla \mathcal{F}(\sigma)|
\]
for $\sigma \in \Gamma(\Sigma)$, $\sigma_0$ is a critical point of $\mathcal{F}$, $C>0$ and $\alpha \in (0,1]$. What do $\alpha$ and $C$ depend on?
\end{op}
\begin{op}{(Convergence of Gradient Flows)}
Determine rates of convergence for the gradient flow of functionals on vector bundles using the Simon-Łojasiewicz inequalities. Formally, assume $\mathcal{F}$ is a real-analytic functional on a vector bundle $\mathbf{V}$ over a manifold $\Sigma$. Consider the gradient flow given by
\[
\frac{d}{dt}s(t)=-\nabla\mathcal{F}(s(t))
\]
For any solution $s(t)$, does there exist a rate function $R(t)$ such that
\[
\lVert s(t)-s_\infty \rVert \leq R(t)
\]
where $s_\infty$ is the a limit point of the flow? What conditions on $\mathcal{F}$ are need to ensure the existence of $s_\infty$? How does $R(t)$ depend on the 
Łojasiewicz exponent $\alpha$? 
\end{op}
\begin{op}{(Sobolev Functionals)}
Generalize the Łojasiewicz-Simon inequalities to functionals that are not necessarily real-analytic, but belong to a given Sobolev space $W^{k,p}$ and are $C^k$. Formally, let $\mathcal{F}: W^{k,p}(\mathbf{V}) \rightarrow \mathbb{R}$ be a $C^k$ functional, $\mathbf{V}$ a vector bundle of a compact manifold $\Sigma$. Determine under what conditions (if any) there is a Simon-Łojasiewicz inequality of the form
\[
|\mathcal{F}(u)-\mathcal{F}(u_0)|^\alpha \leq C \lVert D\mathcal{F}(u) \rVert_{W^{k-1,p}}
\]
where $u \in W^{k,p}$, $u_0$ is a critical point, and $D\mathcal{F}$ is the Fréchet derivative of $\mathcal{F}$. How are $\alpha$ and $C$ determined?
\end{op}

\end{document}